\providecommand{\U}[1]{\protect\rule{.1in}{.1in}}
\tikzset{>=Triangle}
\newtheorem{theorem}{Theorem}[section]
\newtheorem{proposition}[theorem]{Proposition}
\newtheorem{corollary}[theorem]{Corollary}
\newtheorem{lemma}[theorem]{Lemma}
\newtheorem{definition}[theorem]{Definition}
\numberwithin{equation}{section}
\pgfplotsset{compat=1.17}
\begin{document}
\title[On Sequences with at Most a Finite Number of Zero Coordinates]{On Sequences with at Most a Finite Number of Zero Coordinates}
\author[Diego]{Diego Alves}
\address{Departamento de Ensino \\
Instituto Federal do Ceará \\
 63708-260 - Crateús, Brazil.}
\email{diego.costa@ifce.edu.br}
\author[Geivison]{Geivison Ribeiro}
\address{Departamento de Matem\'{a}tica \\
Universidade Federal da Para\'{\i}ba \\
58.051-900 - Jo\~{a}o Pessoa, Brazil.}
\email{geivison.ribeiro@academico.ufpb.br}
\thanks{G. Ribeiro is supported by Grant 2022/1962, Para\'{\i}ba State Research
Foundation (FAPESQ)}
\subjclass[2020]{15A03, 46B87, 46A16}
\keywords{Lineability, Spaceability, Sequence Spaces, Convergence, Series, Algebra, Topology.}

\begin{abstract}
In this paper, we analyze the existence of algebraic and topological structures present in the set of sequences that contain only a finite number of zero coordinates. Inspired by the work of Daniel Cariello and Juan B. Seoane-Sepúlveda, published in [J. Funct. Anal. \textbf{266} (2014), 3797--381], our research reveals new insights and complements their notable results beyond the $\ell_{p}$ spaces for $p \in [1, \infty]$, including the intriguing case where $p \in (0, 1)$.

Our exploration employs notions such as $[\mathcal{S}]$-lineability, pointwise lineability, and $(\alpha, \beta)$-spaceability. This investigation allowed us to verify, for instance, that the set $F \setminus Z(F)$ (where $F$ is a closed subspace of $\ell_{p}$ containing $c_{0}$) is $(\alpha, \mathfrak{c})$-spaceable if and only if $\alpha$ is finite.
\end{abstract}
\maketitle

\section{Introduction and background}

The investigation of the algebraic and topological structures within the set of sequences with only a finite number of zero was initiated by R.M. Aron and V.I. Gurariy in 2003 during a Non-linear Analysis Seminar at Kent State University (Kent, Ohio, USA). They posed the following question:
	
	\textbf{Question (R. Aron \& V. Gurariy, 2003):}  
	\textit{Is there an infinite dimensional closed subspace of the space of bounded sequences where every nonzero element of which has only a finite number of zero coordinates?}
	
	In 2013, Cariello and Seoane-Sepúlveda provided an answer to this question (see \cite{Cariello}). Among their results, they proved that the subset \( Z(X) \) of \( X \) (where \( X = c_0 \) or \( \ell_p \) with \( p \in [1, \infty] \)) formed by sequences with only a finite number of zero coordinates does not contain an infinite dimensional closed subspace, except for the zero vector.
	
 Building on this work, our paper proceeds with such investigation, albeit now employing interconnected notions and techniques. To provide context, let us fix some notations and terminology. We denote \( \mathbb{N} \) as the set of positive integers, \( \mathbb{R} \) as the real scalar field, and $\mathbb{C}$ as the complex scalar field. Unless explicitly stated otherwise, all linear spaces are over \(\mathbb{K}= \mathbb{R} \text{ or } \mathbb{C} \). The term subspace will be used instead of vector subspace. Additionally, \( \alpha \) and \( \beta \) represent cardinal numbers, \( \text{card}(A) \) denotes the cardinality of set \( A \), \( \aleph_0 := \text{card}(\mathbb{N}) \), and \( \mathfrak{c} := \text{card}(\mathbb{R}) \). For \( p \in (0, \infty) \), as usual, we denote \( \ell_p \) as the classical set of absolutely \( p \)-summable sequences, defined as: \[
\ell_{p}:=\left\{  \left(  t_{n}\right)  _{n=1}^{\infty}\in\mathbb{K}%
^{\mathbb{N}}:%
{\textstyle\sum\limits_{n=1}^{\infty}}
\left\vert t_{n}\right\vert ^{p}<\infty\right\}  \text{.}%
\]
Recall that if $p\in(0,1]$, then a \textit{$p$-Banach space} $X$ is a vector space
equipped with a non-negative function denoted by $\left\Vert \cdot\right\Vert_p
$, also known as a $p$-norm, which satisfies the following conditions:

\begin{enumerate}
\item $\left\Vert x\right\Vert _{p}=0$ if and only if $x=0$.

\item $\left\Vert tx\right\Vert _{p}=\left\vert t\right\vert ^{p}\left\Vert
x\right\Vert _{p}$ for all $x\in X$ and $t\in\mathbb{K}$.

\item $\left\Vert x+y\right\Vert _{p}\leq\left\Vert x\right\Vert
_{p}+\left\Vert y\right\Vert _{p}$ for all $x,y\in X$.
\end{enumerate}
It is known that for \( p \in (0,1] \), \( \ell_{p} \) is a \( p \)-Banach space with the \( p \)-norm given by \( \lVert (t_{n})_{n=1}^{\infty} \rVert_{p} := \sum_{n=1}^{\infty} |t_{n}|^{p} \). Although not much is known about the structure of these spaces, they still play an important role in the geometry of $F$-spaces (metrizable topological vector spaces). For more information on the theory of \( p \)-Banach spaces, we refer to \cite{Kalt, Stiles}.

Still in terms of notations and terminology, as usual, we denote $\ell_{\infty}$ as the classical space of
bounded sequences defined as:
\[
\ell_{\infty}:=\left\{  \left(  t_{n}\right)  _{n=1}^{\infty}\in
\mathbb{K}^{\mathbb{N}}:\underset{n\in\mathbb{N}}{\sup}\left\vert
t_{n}\right\vert <\infty\right\}  \text{.}%
\]
For a vector space \( X \), we say that a set \( A \subseteq X \) is \( \alpha \)-lineable if it contains an \( \alpha \)-dimensional subspace of \( X \), excluding the zero vector. Moreover, if \( X \) has a topology, the subset \( A \) is \( \alpha \)-spaceable in \( X \) whenever it contains a closed \( \alpha \)-dimensional subspace of \( X \), excluding the zero vector. For the case where $\alpha$ is infinite, for simplicity, we say that $A$ is \textit{lineable} (respectively \textit{spaceable}). This concepts was first introduced in the seminal paper \cite{AGSS} by Aron, Gurariy and Seoane-Sep\'{u}lveda and, later, in \cite{Enflo, Gurariy-Quarta}. Its essence is to investigate linear structures within exotic settings.
 
A more detailed description of the concept of lineability/spaceability can also be found in \cite{Aron, Bernal, Botelho, Cariello, Enflo, Fonf}. In the quest for a deeper understanding of dimensional relationships within spaceable sets, Fávaro, Pellegrino, and Tomaz introduced a more stringent concept called $\left(  \alpha,\beta\right)  $\textit{-spaceability} in 2019 (referenced as \cite{FPT}). This refined definition posits that a set \(A\) in a topological vector space \(X\) is considered $\left(  \alpha,\beta\right)  $\textit{-spaceable} if it is $\alpha$-lineable, and for each $\alpha$-dimensional subspace \(V_{\alpha}\) contained in \(A \cup \{ 0 \}\), there exists a closed $\beta$-dimensional subspace \(F_{\beta}\) satisfying \(V_{\alpha} \subseteq F_{\beta} \subseteq A \cup \{ 0 \}\).

This heightened conceptualization, although rooted in conventional spaceability, demands meticulous consideration, encompassing both geometric and topological properties within the set, as well as relationships between the dimensions of subspaces therein. It is worth noting that while conventional spaceability implies the variant known as $\left(  \alpha,\alpha\right)  $-spaceability, the reverse assertion is not always valid, as demonstred by Araújo et al$.$ in \cite{Araujo}$.$ For a more comprehensive exposition on this notion, we refer the reader to \cite{Mikaela, Nacib, BRSH, Sheldon, Diogo/Anselmo,
Diogo, Pilar, FPT, Pellegrino}.

Connected to the notion of lineability, for a subspace \( \mathcal{S} \) of \( \mathbb{K}^{\mathbb{N}} \), we say that a subset \( A \) of a Hausdorff topological vector space \( X \) is \( [ (u_{n})_{n=1}^{\infty}, \mathcal{S} ] \)-lineable in \( X \) if, for each sequence \( (c_{n})_{n=1}^{\infty} \in \mathcal{S} \), the series \( \sum_{n=1}^{\infty} c_{n} u_{n} \) converges in \( X \) to a vector in \( A \cup \{ 0 \} \). Moreover, \( A \) is \( [ \mathcal{S} ] \)-lineable in \( X \) if it is \( [ (u_{n})_{n=1}^{\infty}, \mathcal{S} ] \)-lineable for some sequence \( (u_{n})_{n=1}^{\infty} \) of linearly independent elements in \( X \). 

This concept was introduced in \cite{S lineability} by Bernal-González et al$.$ and originally coined by V. Gurariy and R. Aron during a Non-Linear Analysis Seminar at Kent State University. As far as we know, this notion was initially inspired by a result of Levine and Milman from 1940 \cite{Levine e Milman}, created to address the lack of such a concept regarding convergence. Regarding the result of Levine and Milman, they demonstrate that the subset of \( C[0,1] \) consisting of all functions of bounded variation does not contain a closed subspace of infinite dimension.

Other concepts related to the notion of lineability will be introduced throughout the subsequent sections.
\bibliographystyle{plain}

This paper is organized as follows. In Section \ref{section 2}, we focus on
the non spaceability of the set $Z(X)$ for $X\in\{c_{0},\ell_{p}\}$,
$p\in\lbrack1,\infty]$, as demonstrated by the authors in \cite{Cariello}, and
establish that, despite this, the set $Z(X)$ for $X\in\{c_{0},\ell_{p}\}$,
$p\in(0,\infty)$, is $[\mathcal{S}]$-lineable for every subspace $\mathcal{S}$ of
$\ell_{\infty}$. In Section \ref{Section3}, we prove that $Z(X)$ also enjoys
pointwise lineability. In Section \ref{Section4}, we demonstrate that
the complement of $Z(F)$, where $F$ is an infinite dimensional closed subspace of $X\in\{c_{0}%
,\ell_{p}\}$, $p\in(0,\infty]$, is indeed more than spaceable. We establish
that it is, in fact, $(n,\mathfrak{c})$-spaceable for every positive integer $n$.  Finally, in Section \ref{Section5}, we verify in particular that the set $F \setminus Z(F)$ (where $F$ is a closed subspace of $\ell_{p}$ containing $c_{0}$) is $(\alpha, \mathfrak{c})$-spaceable if and only if $\alpha$ is finite.

\section{A Search for Convergence: $\left[  \mathcal{S}\right]
$-lineability\label{section 2}}

In this section, we explore the convergence of series in $Z\left(  F\right)
\cup\left\{  0\right\}  $, where $F$ is any closed subspace of $\ell_{p}$ (with
$p\in(0,\infty]$) that contains $c_{0}$. We begin with a fundamental lemma on the
passage of limits in series, which will allow us to achieve our objective.

\begin{lemma}\label{o lema que permite passar o limite pra dentro}
 Let  $r$ be a natural number and
$\left( t_{n}\right)  _{n=r+1}^{\infty}$ be a sequence in $\ell_{\infty}.$ Suppose that $\left(
k_{j}\right)_{j=1}^{\infty}$ is an unbounded increasing sequence of real numbers with $k_{1}>2.$ Then 
\[
\underset{j\rightarrow\infty}{\lim}\sum\limits_{n=r+1}^{\infty}t_{n}\left(
\frac{n}{r}\right)  ^{-k_{j}}=0\text{.}%
\]
\end{lemma}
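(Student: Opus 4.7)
The plan is to reduce everything to an elementary tail estimate using the integral test, exploiting the hypothesis $k_1 > 2$ to guarantee summability and the unboundedness of $(k_j)$ to force the bound to vanish.

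First I would set $M := \sup_{n > r} |t_n| < \infty$, which exists because $(t_n)_{n=r+1}^\infty \in \ell_\infty$. Then by the triangle inequality,
\[
\left| \sum_{n=r+1}^{\infty} t_n \left(\frac{n}{r}\right)^{-k_j} \right| \leq M \sum_{n=r+1}^{\infty} \left(\frac{n}{r}\right)^{-k_j} = M r^{k_j} \sum_{n=r+1}^{\infty} n^{-k_j}.
\]
So it suffices to show that $r^{k_j} \sum_{n=r+1}^{\infty} n^{-k_j} \to 0$ as $j \to \infty$.

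Next, since $k_j \geq k_1 > 2 > 1$ for every $j$, the function $x \mapsto x^{-k_j}$ is positive and decreasing on $[r, \infty)$, so by the standard integral comparison,
\[
\sum_{n=r+1}^{\infty} n^{-k_j} \leq \int_{r}^{\infty} x^{-k_j}\,dx = \frac{r^{1-k_j}}{k_j - 1}.
\]
Multiplying through by $r^{k_j}$ gives
\[
r^{k_j} \sum_{n=r+1}^{\infty} n^{-k_j} \leq \frac{r}{k_j - 1},
\]
and since $(k_j)$ is unbounded and increasing, the right-hand side tends to $0$ as $j \to \infty$. Combining this with the bound from the first step yields the claim.

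Honestly, I do not expect a serious obstacle here: the argument is a direct tail estimate, and the hypotheses $k_1 > 2$ and $k_j \to \infty$ fit the integral comparison perfectly. The only point that requires the slightest care is making sure the integral converges uniformly enough to be usable, which is exactly why the hypothesis $k_1 > 2$ (giving $k_j - 1 \geq k_1 - 1 > 1$ throughout) is included rather than merely $k_j \to \infty$. One could alternatively split the series into the finitely many terms near $n = r+1$ (which each decay like $((r+1)/r)^{-k_j} \to 0$) plus a tail controlled by the integral, but the single integral estimate above is cleaner.
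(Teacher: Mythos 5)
Your proof is correct, but it takes a genuinely different route from the paper's. The paper runs a dominated-convergence argument for series: since $n>r$ and $k_j>2$, each term satisfies $|t_n(n/r)^{-k_j}|\le C(n/r)^{-2}$ with $C=\|(t_n)\|_{\ell_\infty}$, which gives a summable majorant independent of $j$; one then splits the sum into a finite head (each of whose terms tends to $0$ pointwise as $j\to\infty$) and a uniformly small tail, and concludes via a $\limsup$ estimate. Your argument instead bounds the entire sum at once by the integral test,
\[
\Bigl|\sum_{n=r+1}^{\infty}t_n\bigl(\tfrac{n}{r}\bigr)^{-k_j}\Bigr|\le M\,r^{k_j}\int_{r}^{\infty}x^{-k_j}\,dx=\frac{Mr}{k_j-1},
\]
which is cleaner and buys an explicit quantitative rate $O(1/k_j)$ that the paper's soft argument does not provide. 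One small remark: your closing comment slightly misattributes the role of the hypothesis $k_1>2$ --- your integral estimate only needs $k_j-1>0$ bounded away from $0$, i.e.\ $k_1>1$ would suffice; the threshold $2$ is what the paper's proof uses to dominate by the summable sequence $(n/r)^{-2}$. This does not affect the validity of your argument, since the stated hypothesis is stronger than what you need.
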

\begin{proof}
Consider the set $\mathbb{N}_{r}=\left\{  n\in\mathbb{N}:n>r\right\}  $. For each $j\in
\mathbb{N}$, define $f_{j}\colon\mathbb{N}_{r} \longrightarrow \mathbb{K}$ as
\[
f_{j}\left(  n\right)  :=t_{n}\left(  \frac{n}{r}\right)  ^{-k_{j}}= t_n\left(
\frac{r}{n}\right)^{k_{j}}\text{.}%
\]
Note that for any $j \in \mathbb{N}$
\[
\left(  \frac{n}{r}\right)^{-k_{j}}<\left(  \frac{n}{r}\right)^{-2 } \Longleftrightarrow \left(  \frac{r}{n}\right)  ^{k_{j}}<\left(  \frac{r}{n}\right)^{2} \Longleftrightarrow r^{k_{j}-2}<n^{k_{j}-2}\Longleftrightarrow 
r<n\text{.}
\]
Thus, for every $n\in\mathbb{N}_{r}$, we get%
\[
\left\vert f_{j}\left(n\right) \right\vert =\left\vert t_{n}\right\vert
\left(\frac{n}{r}\right)^{-k_{j}} \leq C \left(  \frac{n}{r}\right)
^{-k_{j}} \leq C\left(  \frac{n}{r}\right)^{-2},
\]
where $C= \left\Vert \left(  t_{n}\right)  _{n=r+1}^{\infty}\right\Vert
_{\ell_{\infty}}.$ Therefore, we have
\[
\sum\limits_{n\in\mathbb{N}_{r}}\left\vert f_{j}\left(  n\right) \right\vert
 \leq C \sum\limits_{n\in\mathbb{N}_{r}}\left(  \frac{n}{r}\right) ^{-2}=C r^{2}
\sum\limits_{n\in\mathbb{N}_{r}}\dfrac{1}{n^{2}}<\infty\text{.}
\]
Thus, for any $\varepsilon>0$, there exists $n_{0}\in\mathbb{N}_{r}$ such that%
\[
\sum\limits_{n\geq n_{0}}\left\vert f_{j}\left(  n\right)  \right\vert
<\varepsilon\text{.}%
\]
Since%
\[
\left\vert \sum\limits_{n\in\mathbb{N}_{r}}f_{j}\left(  n\right)  \right\vert
\leq\sum\limits_{n\in\mathbb{N}_{r}}\left\vert f_{j}\left(  n\right)
\right\vert =\sum\limits_{n<n_{0}}\left\vert f_{j}\left(  n\right)
\right\vert +\sum\limits_{n\geq n_{0}}\left\vert f_{j}\left(  n\right)
\right\vert
\]
and%
\[
\underset{j\rightarrow\infty}{\lim}f_{j}\left(  n\right)  =\underset
{j\rightarrow\infty}{\lim}t_{n}\left(  \frac{n}{r}\right)  ^{-k_{j}}=0\text{,}%
\]
for every $n\in\mathbb{N}_{r}$, we obtain
\begin{align*}
\underset{j\rightarrow\infty}{\limsup}\left\vert \sum\limits_{n\in
\mathbb{N}_{r}}f_{j}\left(  n\right)  \right\vert  &  \leq\underset
{j\rightarrow\infty}{\limsup}\sum\limits_{n<n_{0}}\left\vert f_{j}\left(
n\right)  \right\vert +\underset{j\rightarrow\infty}{\limsup}\sum
\limits_{n\geq n_{0}}\left\vert f_{j}\left(  n\right)  \right\vert \\
&  \leq0+\varepsilon\\
&  =\varepsilon\text{.}%
\end{align*}
Since $\varepsilon$ is arbitrary, we have%
\[
\underset{j\rightarrow\infty}{\limsup}\left\vert \sum\limits_{n\in
\mathbb{N}_{r}}f_{j}\left(  n\right)  \right\vert =0\text{.}%
\]
Moreover,
\[
0\leq\underset{j\rightarrow\infty}{\liminf}\left\vert \sum\limits_{n\in
\mathbb{N}_{r}}f_{j}\left(  n\right)  \right\vert \leq\underset{j\rightarrow
\infty}{\limsup}\left\vert \sum\limits_{n\in\mathbb{N}_{r}}f_{j}\left(
n\right)  \right\vert =0\text{.}%
\]
Thus,%
\[
\underset{j\rightarrow\infty}{\lim}\left\vert \sum\limits_{n\in\mathbb{N}_{r}%
}f_{j}\left(  n\right)  \right\vert =0\text{.}%
\]
Consequently,
\[
\underset{j\rightarrow\infty}{\lim}\sum\limits_{n\in\mathbb{N}_{r}}%
f_{j}\left(  n\right)  =0\text{,}%
\]
as desired.
\end{proof}

The lemma above will be crucial in proving the next result.

\begin{proposition}\label{linea}
For $p\in(0,\infty]$, the set $Z\left(  \ell_{p}\right)  $ is $\left[
\ell_{\infty}\right]  $-lineable.
\end{proposition}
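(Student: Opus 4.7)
The plan is to exhibit a sequence $(u_j)_{j=1}^\infty$ of linearly independent vectors in $\ell_p$ such that for every $(c_j)\in\ell_\infty$ the series $\sum_j c_j u_j$ converges in $\ell_p$ to a vector in $Z(\ell_p)\cup\{0\}$. I would shape the vectors so that Lemma \ref{o lema que permite passar o limite pra dentro} directly governs the coordinate-wise tail of the sum. Concretely, fix $r\in\mathbb{N}$ and a strictly increasing unbounded real sequence $(k_n)_{n=1}^\infty$ with $k_1>\max\{2,\,1/p\}$ (the bound $2$ is what the lemma needs; the bound $1/p$ will put $u_j\in\ell_p$ in the $p$-Banach range $p<1$), and set
\[
u_j(n) := \left(\frac{j+r}{r}\right)^{-k_n},\qquad j,n\in\mathbb{N}.
\]

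First I would verify that each $u_j$ lies in $\ell_p$ and that the series $\sum_j c_j u_j$ converges. Since $(j+r)/r>1$ and $k_n\uparrow\infty$, the coordinates of $u_j$ decay geometrically in $n$; summing the resulting geometric series makes $\|u_j\|_p$ comparable to $((j+r)/r)^{-k_1}$ (when $p\ge 1$) or $((j+r)/r)^{-pk_1}$ (when $p<1$) for large $j$, and the choice of $k_1$ then yields absolute convergence of $\sum_j c_j u_j$ in $\ell_p$ for every bounded $(c_j)$; call the sum $x$. Linear independence of $(u_j)_j$ follows from a standard asymptotic argument: given a finite relation $\sum a_j u_j=0$, divide the $n$-th coordinate equation by the dominant decay factor associated with $j_\ast:=\min\{j:a_j\neq 0\}$ and let $n\to\infty$ to force $a_{j_\ast}=0$; iterate.

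The crux is showing that $x\in Z(\ell_p)\cup\{0\}$. Assuming $x\neq 0$, let $j_0:=\min\{j:c_j\neq 0\}$. For every coordinate $n$, I would factor
\[
x(n) = \left(\tfrac{j_0+r}{r}\right)^{-k_n}\!\left(c_{j_0} + \sum_{j>j_0} c_j\left(\tfrac{j+r}{j_0+r}\right)^{-k_n}\right),
\]
and then invoke Lemma \ref{o lema que permite passar o limite pra dentro}. After the substitution $m=j+r$ the inner sum reads $\sum_{m>j_0+r} c_{m-r}\,(m/(j_0+r))^{-k_n}$, which matches the lemma's $\sum_{n>r} t_n (n/r)^{-k_j}$ with $j_0+r$ playing the role of $r$, the bounded sequence $(c_{m-r})$ playing the role of $(t_n)$, and the exponents $(k_n)$ (our coordinate label) playing the role of the lemma's limit variable $(k_j)$. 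The lemma thus sends the inner sum to $0$ as $n\to\infty$, whence $x(n)\neq 0$ for all sufficiently large $n$ and $x\in Z(\ell_p)$.

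The main obstacle, as I see it, is guessing the right shape for the vectors. The obvious choice $u_j(n)=n^{-k_j}$ (exponent indexed by the vector label $j$) produces a coordinate-wise tail of the form ``sum in $j$, limit in $n$'', which is the \emph{transpose} of what the lemma controls (``sum in $n$, limit in $j$''); that construction would force one to prove a dual version of the lemma before finishing. Flipping the two indices, so that the base of the power depends on $j$ and the exponent depends on $n$, aligns the expression for $x(n)$ with the lemma and turns the ``finitely many zero coordinates'' conclusion into an immediate consequence of it.
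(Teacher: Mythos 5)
Your construction is essentially the paper's own: the paper takes $u_n$ with $k$-th coordinate $a_n^{1/p}n^{-k/p}$ (a geometric-type sequence whose base is indexed by the vector label and whose exponent by the coordinate label) and applies Lemma \ref{o lema que permite passar o limite pra dentro} to the coordinates of $\sum_n t_n u_n$ exactly as you do, the only cosmetic differences being that the paper normalizes $\Vert u_n\Vert_p=1$ and inserts weights $2^{-n}$ to force convergence, and reaches the conclusion by contradiction (iteratively peeling off $t_2,t_3,\dots$) where you argue directly at the first nonzero coefficient. One small repair is needed in your version: for a general strictly increasing unbounded $(k_n)$ the coordinates $((j+r)/r)^{-k_n}$ do \emph{not} decay geometrically in $n$ (take $k_n=2+\log n$, so that $((j+r)/r)^{-k_n}$ is a constant times $n^{-\log((j+r)/r)}$, which for small $j$ is not $p$-summable and $u_j\notin\ell_p$), so you should additionally require, say, $k_{n+1}-k_n\geq\delta>0$; with that, your norm estimates, the convergence of $\sum_j c_j u_j$, and the application of the lemma all go through.
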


\begin{proof}
For each $n>1$ $\left(  n\in\mathbb{N}\right)  $, consider $a_{n}:=\frac{n-1}{n}$,
and define the sequence
$$u_n:=
\left\{
\begin{array}{ccc}
\left(a_n^{1/p}n^{-k/p} \right)_{k=0}^{\infty}, & \text{if} & p \in (0, \infty), \\\\

\left(n^{-k} \right)_{k=0}^{\infty}, & \text{if} & p=  \infty. \\
\end{array}
\right.
$$
Since%
\[
\sum\limits_{k=0}^{\infty}\left\vert a_{n}^{1/p} \, n^{-k/p }\right\vert ^{p}=\sum\limits_{k=0}^{\infty}a_{n}n^{-k}=a_{n}\sum
\limits_{k=0}^{\infty}n^{-k}= a_{n}\frac{n}{n-1}=1
\]
and%
\[
\underset{k\geq 0 }{\sup}\left\vert
n^{-k}\right\vert =1
\]
for each $n>1$, we have that the sequence $\left(  u_{n}\right)  _{n=2}^{\infty}\subset
\ell_{p}$ satisfies $\left\Vert u_{n}\right\Vert _{p}=1$ for each $n>1$. We claim that the sequence $\left(  u_{n}\right)  _{n=2}^{\infty}$ is linearly independent in $\ell_{p}$. To prove this, consider scalars $\alpha_{1},\alpha_{2},\ldots,\alpha_{N}$ such that

\begin{equation}\label{osalfaisnaotodosnulos}
 \alpha_{1} \, a_{n_{1}}^{1/p} \, n_{1}^{-k/p}+\alpha_{2} \, a_{n_{2}}^{1/p} \, n_{2}^{-k/p} + \ldots +\alpha_{N} \,a_{n_{N}}^{1/p} \,
n_{N}^{-k/p}=0, \, \,  
\end{equation}
for all $k \geq 0.$ Without loss of generality, we can assume that $n_{1}<n_{2}<\ldots<n_{N}$. Then, multiplying (\ref{osalfaisnaotodosnulos}) by $n_1^{k/p},$ we get
\[
\alpha_{1} a_{n_{1}}^{1/p}+\alpha_{2}a_{n_{2}
}^{1/p}\left(  \frac{n_{2}}{n_{1}}\right)  ^{-k/p}
+\cdots+\alpha_{N}a_{n_{N}}^{1/p}\left(  \frac{n_{N}}{n_{1}}\right)
^{-k/p}=0,
\]
for all $k \geq 0 $ and for all $i\in\left\{
2,\ldots,N\right\}.$ Since the terms $\left(  \frac{n_{i}}{n_{1}}\right)  ^{-k}$ tend to $0$ as $k$ tends to infinity, we can infer that $\alpha_{1} a_{n_1}^{1/p}=0,$ and consequently, $\alpha_{1}=0.$

According to $\left(  \text{\ref{osalfaisnaotodosnulos}}\right),$ we 
have:
\[
\alpha_{2} \, a_{n_{2}}^{1/p} \, n_{2}^{-k/p}+ \cdots + \alpha_{N} \, a_{n_{N}}^{1/p} \, n_{N}^{-k/p}=0,
\]
for all $k \geq 0 $. Proceeding in the same way, we conclude that $\alpha_2= \cdots = \alpha_N=0.$ This argument concludes that $(u_n)_{n=2}^{\infty}$ is linearly independent in the case where $p \in (0, \infty).$ 
The case $p= \infty$ can be treated in an analogous way. Now, let
\begin{align*}
T\colon\ell_{\infty}  &  \rightarrow\ell_{p}\\
\left(  t_{n}\right)  _{n=2}^{\infty}  &  \mapsto%
{\textstyle\sum\limits_{n=2}^{\infty}}
t_{n}v_{n}\text{,}%
\end{align*}
where $v_{n}:=2^{-n}u_{n}$ for every $n>1$. Let us show that $T$ is well
defined. In fact, we have that
\begin{align*}
\sum\limits_{n=2}^{\infty}\left\Vert t_{n}v_{n}\right\Vert _{p}  &  =\left\{
\begin{array}
[c]{ccc}%
\sum\limits_{n=2}^{\infty}\left\vert t_{n}\right\vert \left\Vert
v_{n}\right\Vert _{p}\text{,} & \text{if} & p\in\left[  1,\infty\right]
\text{,}\\
\sum\limits_{n=2}^{\infty}\left\vert t_{n}\right\vert ^{p}\left\Vert
v_{n}\right\Vert _{p}\text{,} & \text{if} & p\in\left(  0,1\right)  \text{.}%
\end{array}
\right. \\
&  \leq\left\{
\begin{array}
[c]{ccc}%
C\sum\limits_{n=2}^{\infty}\left\Vert v_{n}\right\Vert _{p}\text{,} &
\text{if} & p\in\left[  1,\infty\right]  \text{,}\\
C^{p}\sum\limits_{n=2}^{\infty}\left\Vert v_{n}\right\Vert _{p}\text{,} &
\text{if} & p\in\left(  0,1\right)  \text{.}%
\end{array}
\right. \\
&  =\left\{
\begin{array}
[c]{ccc}%
C\sum\limits_{n=2}^{\infty}2^{-n}\left\Vert u_{n}\right\Vert _{p}\text{,} &
\text{if} & p\in\left[  1,\infty\right]  \text{,}\\
C^{p}\sum\limits_{n=2}^{\infty}2^{-np}\left\Vert u_{n}\right\Vert _{p}\text{,}
& \text{if} & p\in\left(  0,1\right)  \text{.}%
\end{array}
\right. \\
&  =\left\{
\begin{array}
[c]{ccc}%
C\sum\limits_{n=2}^{\infty}2^{-n}\text{,} & \text{if} & p\in\left[
1,\infty\right]  \text{,}\\
C^{p}\sum\limits_{n=2}^{\infty}2^{-np}\text{,} & \text{if} & p\in\left(
0,1\right)  \text{.}%
\end{array}
\right. \\
&  <\infty\text{,}%
\end{align*}
where $C:=\left\Vert \left(  t_{n}\right)  _{n=2}^{\infty}\right\Vert
_{\infty}$. Thus, given $\varepsilon>0$, there exists $n_{0}\in\mathbb{N}$ such that $\sum\limits_{n=n_{0}%
+1}^{\infty}\left\Vert t_{n}v_{n}\right\Vert _{p}<\varepsilon$. Then, for
$r,s\in\mathbb{N}$ with $r>s\geq n_{0}$, we have
\[
\left\Vert \sum\limits_{n=2}^{r}t_{n}v_{n}-\sum\limits_{n=2}^{s}t_{n}%
v_{n}\right\Vert _{p}=\left\Vert \sum\limits_{n=s+1}^{r}t_{n}v_{n}\right\Vert
_{p}\leq\sum\limits_{n=s+1}^{r}\left\Vert t_{n}v_{n}\right\Vert _{p}\leq
\sum\limits_{n=n_{0}+1}^{\infty}\left\Vert t_{n}v_{n}\right\Vert
_{p}<\varepsilon\text{.}%
\]
Since $\ell_{p}$ is an $F$-space $\left(  \textbf{a complete metrizable
topological vector space}\right)  $, we can conclude that sequence $$\left(
\sum\limits_{n=2}^{r}t_{n}v_{n}\right)  _{r=2}^{\infty}$$ converges in
$\ell_{p}$ for the series $\sum\limits_{n=2}^{\infty}t_{n}v_{n}$. That is, the
series $\sum\limits_{n=2}^{\infty}t_{n}v_{n}\in\ell_{p}$ and therefore, $T$ is
well defined. It is plain that $T$ is linear. Due to the fact that%
\[
T\left(  \ell_{\infty}\right)  \subset\ell_{p}\text{,}%
\]
it remains to show that%
\[
T\left(  \ell_{\infty}\right)  \subset Z\left(  \ell_{p}\right)  \cup\left\{
0\right\}  \text{.}%
\]
We will make the case $p \in (0, \infty),$ the case $p= \infty$ is proved in an analogous way. To do this, consider an element $$x=\sum\limits_{n=2}^{\infty}t_{n}u_{n}=\left(  \sum
\limits_{n=2}^{\infty}t_{n}a_{n}^{1/p},\sum\limits_{n=2}^{\infty}
t_{n}a_{n}^{1/p}n^{-1/p},\sum\limits_{n=2}^{\infty}t_{n}
a_{n}^{1/p}n^{-2/p},\sum\limits_{n=2}^{\infty}t_{n}
a_{n}^{1/p}n^{-3/p},\ldots\right)$$ of $T\left(  \ell_{\infty
}\right)  \setminus\left\{  0\right\}.$ Suppose, for
contradiction, that there exists an increasing sequence of positive integers
$\left(  k_{j}\right)  _{j=1}^{\infty}$ with $k_{1}>2p$ such that for each
$j\in\mathbb{N}$,%
\begin{equation}
\sum\limits_{n=2}^{\infty}t_{n} \, a_{n}^{1/p} \, n^{-k_j / p} =0\text{. } \label{a serie infinita de n elevado a menos kj}%
\end{equation}
The plan here is to show that $x=0$, leading to a contradiction. Let us start
by verifying that $t_{2}=0$. Multiplying (\ref{a serie infinita de n elevado a menos kj}) by $2^{k_j/p}$ we obtain
\begin{equation}\label{aa}
t_{2} \, a_{2}^{1/p} \, +\sum\limits_{n=3}^{\infty}t_{n}a_{n}^{1/p}%
\left(  \frac{n}{2}\right)  ^{-k_{j}/p}=0,
\end{equation}
for each $j\in\mathbb{N}$. Since%
\[
\left\vert t_{n}a_{n}^{1/p}\right\vert =\left\vert t_{n}\right\vert
a_{n}^{1/p}\leq\left\vert t_{n}\right\vert \text{ and }\left(
t_{n}\right)  _{n=2}^{\infty}\in\ell_{\infty}\text{,}%
\]
we conclude $(t_n a_n^{1/p})_{n=3}^{\infty} \in \ell_{\infty}.$ Since $(k_j/p)_{j=1}^{\infty}$ is an unlimited increasing sequence of real numbers such that $k_{1}/p>2$ we obtain by Lemma \ref{o lema que permite passar o limite pra dentro} that $$\displaystyle\underset{j\rightarrow\infty}{\lim}\sum\limits_{n=3}^{\infty}%
t_{n}a_{n}^{1/p}\left( \dfrac{n}{2}\right)^{-k_{j}/p}=0.$$
Hence, we have $t_{2}a_{2}^{1/p}=0,$ and consequently, $t_{2}=0.$ Now, let us conclude that $t_3=0.$ From (\ref{aa}) and $t_2=0,$ we have
\[
\sum\limits_{n=3}^{\infty}t_{n} \, a_{n}^{1/p} \, n^{-k_j/p}
=0\text{.}%
\]
multiplying the last identity by $3^{k_{j}/p}$, we obtain
\[
t_{3}a_{3}^{1/p}+\sum\limits_{n=4}^{\infty}t_{n} a_{n}^{1/p}\left( \frac{n}{3}\right)^{-k_{j}/p}=0.
\]
Again applying the Lemma \ref{o lema que permite passar o limite pra dentro} again, we get $t_{3}a_{3}^{1/p}=0,$ and therefore $t_{3}=0.$
Proceeding recursively in this manner, we can conclude that $t_{n}=0$ for each
$n\geq2$. However, this is a contradiction since $x\neq0$. Therefore, the
result is established. 
\end{proof}

\medskip

Given $(t_n)_{n=2}^{\infty} \in \ell_{\infty},$ the arguments employed above show that $t_{n}=0$ for all $n\geq2$, only required the existence of an increasing sequence of
positive integers $\left(  k_{j}\right)_{j=1}^{\infty}$ such that $k_{1}>2$ and
\begin{equation}\nonumber
\sum\limits_{n=2}^{\infty}t_{n} \, a_{n}^{1/p} \, n^{-k_j/p}=0, \, \text{
for each }j\in\mathbb{N}\text{.}%
\end{equation}
Hence, if we assume
\[
T\left(  \left(  t_{n}\right)_{n=2}^{\infty}\right)  =0\text{,}
\]
we will certainly have that $(t_n)_{n=2}^{\infty}$ is the null sequence, which establishes the injectivity of $T.$

\medskip

Moreover, in addition to recovering the result established by Cariello and Seoane-Sepúlveda in
\cite[Corollary 2.2]{Cariello}, the next result also includes the case where
$p\in\left(  0,1\right)  $, expanding the scope of the conclusion.

\begin{corollary}
For $p\in(0,\infty]$, the set $Z\left(  \ell_{p}\right)  $ is maximal lineable.
\end{corollary}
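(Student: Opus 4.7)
The plan is to leverage Proposition \ref{linea} together with the injectivity remark stated immediately after its proof. Proposition \ref{linea} produces a linear map $T\colon \ell_{\infty} \to \ell_{p}$ whose image is contained in $Z(\ell_{p}) \cup \{0\}$, and the subsequent remark shows that $T$ is injective. Consequently, $T(\ell_{\infty})$ is a subspace of $\ell_{p}$ contained in $Z(\ell_{p}) \cup \{0\}$ whose Hamel dimension equals that of $\ell_{\infty}$.

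The only remaining ingredient is the fact that $\dim \ell_{\infty} = \mathfrak{c}$ and that $\dim \ell_{p} = \mathfrak{c}$ for every $p \in (0,\infty]$, so that any subspace of cardinality $\mathfrak{c}$ qualifies as maximal. Since $\ell_{p} \subset \mathbb{K}^{\mathbb{N}}$, we have $\dim \ell_{p} \leq \mathfrak{c}$; on the other hand, the family $(u_{n})_{n=2}^{\infty}$ itself spans countably many linearly independent vectors in $\ell_{p}$, and more is true: an explicit uncountable linearly independent family in $\ell_{\infty}$ (hence in $\ell_{p}$ through $T$) can be produced, for example by invoking any almost disjoint family of subsets of $\mathbb{N}$ of cardinality $\mathfrak{c}$ and considering the corresponding characteristic sequences, which lie in $\ell_{\infty}$ and are easily seen to be linearly independent. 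Thus $\dim \ell_{\infty} = \mathfrak{c} = \dim \ell_{p}$.

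Putting these pieces together, the plan reduces to the following short argument: invoke Proposition \ref{linea} to obtain $T$, cite injectivity of $T$ from the remark, use $\dim \ell_{\infty} = \mathfrak{c}$ to conclude $\dim T(\ell_{\infty}) = \mathfrak{c}$, and observe that this coincides with $\dim \ell_{p}$, so $Z(\ell_{p})$ contains a subspace of maximal dimension in $\ell_{p}$, excluding the origin. No real obstacle is expected; the only point requiring a sentence of justification is the equality $\dim \ell_{\infty} = \mathfrak{c}$, which is standard but worth mentioning, and it is what upgrades the $[\ell_{\infty}]$-lineability of Proposition \ref{linea} into maximal lineability in the Aron--Gurariy--Seoane sense.
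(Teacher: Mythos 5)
Your proposal is correct and is essentially the paper's own argument: the paper proves the corollary in one line by noting that the operator $T$ from Proposition \ref{linea} is injective (as established in the remark following its proof), so $T(\ell_{\infty})$ is a subspace of dimension $\dim\ell_{\infty}=\mathfrak{c}$ inside $Z(\ell_{p})\cup\{0\}$. The only difference is that you spell out the standard cardinality facts $\dim\ell_{\infty}=\dim\ell_{p}=\mathfrak{c}$ (via an almost disjoint family), which the paper leaves implicit.
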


\begin{proof}
We simply note that the linear operator%
\begin{align*}
T\colon\ell_{\infty}  &  \rightarrow\ell_{p}\\
\left(  t_{n}\right)  _{n=2}^{\infty}  &  \mapsto%
{\textstyle\sum\limits_{n=2}^{\infty}}
t_{n}v_{n}\text{,}%
\end{align*}

defined in the proof of the previous proposition, is injective.
\end{proof}

Since the sequence $(u_n)_{n=2}^{\infty}$ of the proof of Proposition \ref{linea} is made up of elements of $c_0,$ we immediately obtain the following corollaries:
\begin{corollary}\label{ZFislineableinc0}
For $p\in(0,\infty]$, if $F$ is a closed subspace of
$\ell_{p}$ that contains $c_{0}$, then $Z\left(  F\right)  $ is $\left[
\ell_{\infty}\right]  $-lineable in $F$.
\end{corollary}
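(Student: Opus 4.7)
The plan is simply to reuse verbatim the construction of Proposition \ref{linea} and verify that every object produced there already lives inside $F$. Let $(u_n)_{n=2}^{\infty}$ and $v_n := 2^{-n} u_n$ denote the sequences from that proof, and let $T\colon \ell_\infty \to \ell_p$ be the linear operator constructed there.

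First I would observe that each $u_n$ actually belongs to $c_0$: for $p \in (0, \infty)$ the $k$-th coordinate is $a_n^{1/p} n^{-k/p}$, which tends to $0$ as $k \to \infty$ since $n > 1$; for $p = \infty$ it is simply $n^{-k} \to 0$. Consequently $v_n \in c_0 \subseteq F$ for every $n \geq 2$, and the linear independence argument given in Proposition \ref{linea} (carried out inside $\ell_p \supseteq F$) still shows that $(v_n)_{n=2}^\infty$ is linearly independent in $F$.

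Next I would show that the series $\sum_{n=2}^\infty t_n v_n$ converges in $F$ for every $(t_n)_{n=2}^\infty \in \ell_\infty$. The Cauchy estimate performed in the proof of Proposition \ref{linea} gives convergence in $\ell_p$, and since every partial sum is a finite linear combination of elements of $F$, it lies in $F$; closedness of $F$ in $\ell_p$ then forces the limit into $F$ as well. Thus $T$ may be regarded as a well-defined linear map $\ell_\infty \to F$. Proposition \ref{linea} already guarantees $T(\ell_\infty) \subseteq Z(\ell_p) \cup \{0\}$, and intersecting with $F$ yields $T(\ell_\infty) \subseteq Z(F) \cup \{0\}$, which is precisely the $[\ell_\infty]$-lineability of $Z(F)$ in $F$.

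There is no genuine obstacle to overcome here: the whole point is that the sequences $v_n$ manufactured in Proposition \ref{linea} were produced inside $c_0$ from the outset, so the conclusion transfers automatically to any closed superspace of $c_0$ sitting inside $\ell_p$.
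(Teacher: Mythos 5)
Your proposal is correct and follows exactly the paper's route: the paper justifies this corollary with the single observation that the vectors $u_{n}$ from Proposition \ref{linea} already lie in $c_{0}\subseteq F$, so the operator $T$ takes values in $F$ and the rest of that proof carries over unchanged. You have merely spelled out the details (closedness of $F$ forcing the limits of the partial sums into $F$, and $Z(\ell_{p})\cap F=Z(F)$) that the paper leaves implicit.
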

As a consequence of Corollary \ref{ZFislineableinc0} we have the following result:

\begin{corollary} \label{ZFlineableCorollary}
For $p\in(0,\infty]$, if $F$ is a closed subspace of
$\ell_{p}$ that contains $c_{0}$, then $Z\left(  F\right)  $ is lineable.
\end{corollary}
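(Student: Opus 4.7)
The plan is to derive lineability directly from the stronger $[\ell_{\infty}]$-lineability already established in Corollary \ref{ZFislineableinc0}. That corollary supplies a linearly independent sequence $(v_{n})_{n=2}^{\infty}$ in $F$ (concretely, $v_{n} = 2^{-n}u_{n}$ from the proof of Proposition \ref{linea}) with the property that $\sum_{n=2}^{\infty} t_{n} v_{n}$ converges in $F$ to an element of $Z(F) \cup \{0\}$ for every $(t_{n}) \in \ell_{\infty}$.

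Given this, I would let $V$ denote the linear span of $\{v_{n} : n \geq 2\}$, which is infinite dimensional by linear independence of the $v_{n}$. To verify that $V \setminus \{0\} \subseteq Z(F)$, I would take any nonzero $w \in V$, write it as $w = \sum_{n=2}^{N} c_{n} v_{n}$ for some scalars $c_{2},\ldots,c_{N}$ and some $N$, and then define $t_{n} := c_{n}$ for $2 \leq n \leq N$ and $t_{n} := 0$ otherwise. Then $(t_{n})_{n=2}^{\infty} \in \ell_{\infty}$ and $\sum_{n=2}^{\infty} t_{n} v_{n} = w$, so the $[\ell_{\infty}]$-lineability forces $w \in Z(F) \cup \{0\}$. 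Hence $V$ is an infinite dimensional subspace of $F$ contained in $Z(F) \cup \{0\}$, which is exactly the definition of lineability.

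The substantive work was already carried out in Proposition \ref{linea}, so there is no genuine obstacle at this stage; this corollary reduces to the elementary observation that finite linear combinations of the witnessing sequence correspond to eventually zero (hence bounded) scalar sequences, and therefore automatically lie in the image of the operator $T$ constructed there.
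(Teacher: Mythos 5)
Your argument is correct and follows essentially the same route as the paper: the paper passes from $[\ell_{\infty}]$-lineability to $[c_{00}]$-lineability and then cites \cite{S lineability} for the implication to lineability, while you simply unpack that cited implication explicitly (finite linear combinations of the witnessing sequence correspond to eventually zero, hence bounded, coefficient sequences, so the span lands in $Z(F)\cup\{0\}$ and is infinite dimensional by linear independence). No gap; this is just the paper's proof with the reference replaced by the elementary verification.
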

\begin{proof} Since the set $Z\left(  F\right)  $ is $\left[
\ell_{\infty}\right]  $-lineable, it is also $\left[
c_{00}\right]  $-lineable, and according to \cite{S lineability}, it is also lineable.
\end{proof}

\section{The pointwise lineability of set $Z\left(  X\right)  $ for $X=c_{0}$
or $\ell_{p}$, $p\in(0,\infty]$\label{Section3}}

In this section, we shall demonstrate, in particular, that the result from
\cite[Corollary 2.2]{Cariello} also applies within the framework of pointwise lineability (recall that a set $A$ in a vector space $X$ is
\textit{poinwise lineable} if, for every $x$ in $A$, there is an infinite dimensional
subspace $Y\subseteq X$ such that $x\in Y\subseteq A\cup\left\{  0\right\}  $. If in addition, $\dim Y=\dim X$ then $A$ is called maximal pointwise
lineable)\textbf{.}

\begin{proposition}
For $X=c_{0}$ or $\ell_{p}$ with $p\in(0,\infty]$, the set $Z\left(  X\right)
$ is maximal pointwise lineable.
\end{proposition}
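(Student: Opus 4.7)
The plan is to reduce the pointwise lineability claim to the construction of a single $\mathfrak{c}$-dimensional subspace of $\ell_{\infty}$ that contains the constant sequence $\mathbf{1}:=(1,1,1,\ldots)$ and is contained in $Z(\ell_{\infty})\cup\{0\}$, and then transport this subspace to $X$ via coordinate-wise multiplication by $x$. For a fixed $x\in Z(X)$, consider
$$
\phi_{x}\colon\ell_{\infty}\longrightarrow X,\qquad \phi_{x}(t)=(x_{n}t_{n})_{n=1}^{\infty}.
$$
A routine estimate (e.g.\ $|x_{n}t_{n}|^{p}\leq\|t\|_{\infty}^{p}|x_{n}|^{p}$) shows that $\phi_{x}$ is well defined and linear into $X$ in each of the cases $X=c_{0}$, $X=\ell_{p}$ with $p\in(0,\infty)$, and $X=\ell_{\infty}$. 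The key observation is: if $v\in Z(\ell_{\infty})$, then $(\phi_{x}(v))_{n}=0$ only when $x_{n}=0$ or $v_{n}=0$; both are finite sets of indices, so $\phi_{x}(v)\in Z(X)$.

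To build the auxiliary subspace, I would recycle the vectors $v_{n}:=2^{-n}(n^{-k})_{k=0}^{\infty}$, $n\geq 2$, from the proof of Proposition \ref{linea} with $p=\infty$, and define
$$
\Psi\colon\ell_{\infty}\longrightarrow\ell_{\infty},\qquad \Psi(t)=t_{1}\mathbf{1}+\sum_{n=2}^{\infty}t_{n}v_{n},
$$
with the series converging absolutely in $\ell_{\infty}$ since $\|t_{n}v_{n}\|_{\infty}\leq\|t\|_{\infty}\, 2^{-n}$. Set $V:=\Psi(\ell_{\infty})$. Two properties need verification: (i) $\Psi$ is injective, so $\dim V=\mathfrak{c}$; and (ii) $V\setminus\{0\}\subseteq Z(\ell_{\infty})$. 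For (i), if $\Psi(t)=0$, then letting $k\to\infty$ in the $k$-th coordinate identity and applying dominated convergence to the tail $\sum_{n\geq 2}t_{n}2^{-n}n^{-k}$ forces $t_{1}=0$, after which the injectivity of the operator $T$ from Proposition \ref{linea} gives $t_{n}=0$ for $n\geq 2$. For (ii), if $t_{1}\neq 0$, then $\Psi(t)_{k}\to t_{1}\neq 0$ as $k\to\infty$, so only finitely many coordinates of $\Psi(t)$ can vanish; if $t_{1}=0$, the conclusion is immediate from Proposition \ref{linea} itself.

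To conclude, I would observe that $\phi_{x}|_{V}$ is injective: any $v\in V\setminus\{0\}\subseteq Z(\ell_{\infty})$ has cofinitely many nonzero coordinates, as does $x$, so $\phi_{x}(v)\neq 0$. Hence $Y:=\phi_{x}(V)$ is a $\mathfrak{c}$-dimensional subspace of $X$, it contains $x=\phi_{x}(\mathbf{1})$, and $Y\subseteq Z(X)\cup\{0\}$ by the observation of the first paragraph, which is exactly what maximal pointwise lineability demands. The delicate step is the enlargement of $T(\ell_{\infty})$ by $\mathbf{1}$ without destroying the ``finitely many zeros'' property; the argument hinges on the asymptotic behavior of $\Psi(t)_{k}$ as $k\to\infty$, which is shifted from $0$ to the nonzero constant $t_{1}$ precisely when the $\mathbf{1}$-coefficient is nonzero.
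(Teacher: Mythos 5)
Your proposal is correct, and it reaches the goal by a route that is organized quite differently from the paper's, even though the subspace it produces is essentially the same one. The paper argues directly: given $x=(x_{n})_{n=1}^{\infty}\in Z(X)$ it sets $u_{1}=x$ and $u_{j}=(j^{-n}x_{n})_{n=1}^{\infty}$ for $j\geq2$, defines $T\colon\ell_{1}\to X$ by $(t_{j})_{j=1}^{\infty}\mapsto\sum_{j}t_{j}u_{j}$, and proves well-definedness (with a three-way case analysis on $p$), injectivity, and $T(\ell_{1})\subseteq Z(X)\cup\{0\}$ by re-running the limiting argument of Lemma \ref{o lema que permite passar o limite pra dentro} on the hypothetical zero coordinates, first cancelling the factor $x_{n_{k}}\neq0$. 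Your factorization through $\ell_{\infty}$ replaces that last step by two cleaner ingredients: the observation that coordinatewise multiplication by a fixed element of $Z(X)$ carries $Z(\ell_{\infty})$ into $Z(X)$, and the enlargement of the range of the operator $T$ of Proposition \ref{linea} (at $p=\infty$) by the vector $\mathbf{1}$, justified by the limit $\Psi(t)_{k}\to t_{1}$ --- which is itself an instance of Lemma \ref{o lema que permite passar o limite pra dentro} with $r=1$ applied to the bounded sequence $(t_{n}2^{-n})_{n\geq2}$. Note that $\phi_{x}$ applied to your generators $\mathbf{1}$ and $(n^{-k})_{k=0}^{\infty}$ yields exactly $x$ and scalar multiples of the paper's $u_{n}$, so the two constructions span the same directions; the difference is that you reuse Proposition \ref{linea} as a black box and thereby avoid repeating the case analysis over $p$, at the price of passing through the auxiliary space $\ell_{\infty}$. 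Two small points you should make explicit: the injectivity of the paper's $T$ at $p=\infty$, which you invoke, is only recorded in the remark following Proposition \ref{linea}; and the word ``maximal'' requires the (standard) fact that $\dim X=\mathfrak{c}$ for every $X$ under consideration, so that $\dim Y=\dim\ell_{\infty}=\mathfrak{c}=\dim X$.
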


\begin{proof}
Let $u_{1}=\left(  x_{n}\right)  _{n=1}^{\infty}\in X$ be a vector in
$Z\left(  X\right)  $. Consider the sequences $u_{j}=\left(  u_{n}^{\left(
j\right)  }\right)  _{n=1}^{\infty}$ for $j\geq2$, defined by $u_{n}^{\left(
j\right)  }:=j^{-n}x_{n}$ for each $n\in\mathbb{N}$ and consider the linear
operator%
\begin{align*}
T\colon\ell_{1} &  \rightarrow X\\
\left(  t_{j}\right)  _{j=1}^{\infty} &  \mapsto%
{\textstyle\sum\limits_{j=1}^{\infty}}
t_{j}u_{j}\text{.}%
\end{align*}
Due to the fact that for $(t_n)_{n=2}^{\infty} \in \ell_1,$
\begin{align*}
\left\Vert t_{j}u_{j}\right\Vert _{X}  & =\left\{
\begin{array}
[c]{ccc}%
\left(  \sum\limits_{n=1}^{\infty}\left\vert t_{j}u_{n}^{\left(  j\right)
}\right\vert ^{p}\right)  ^{1/p}\text{,} & \text{if} & p\in\left[
1,\infty\right)  \text{,}\\
\sum\limits_{n=1}^{\infty}\left\vert t_{j}u_{n}^{\left(  j\right)
}\right\vert ^{p}\text{,} & \text{if} & p\in\left(  0,1\right)  \text{,}\\
\displaystyle\sup_{n}\left\vert t_{j}u_{n}^{\left(  j\right)  }\right\vert \text{,}& \text{if} & p=\infty  \text{.}\\
\end{array}
\right.  \\
& =\left\{
\begin{array}
[c]{ccc}%
\left\vert t_{j}\right\vert \left(  \sum\limits_{n=1}^{\infty}\left\vert
u_{n}^{\left(  j\right)  }\right\vert ^{p}\right)  ^{1/p}\text{,} & \text{if}
& p\in\left[  1,\infty\right)  \text{,}\\
\left\vert t_{j}\right\vert ^{p}\sum\limits_{n=1}^{\infty}\left\vert
u_{n}^{\left(  j\right)  }\right\vert ^{p}\text{,} & \text{if} & p\in\left(
0,1\right)  \text{,}\\
\left\vert t_{j}\right\vert \displaystyle\sup_{n}\left\vert u_{n}^{\left(  j\right)
}\right\vert \text{,}& \text{if} & p=\infty  \text{.}\\
\end{array}
\right.  \\
\end{align*}
\begin{align*}
& =\left\{
\begin{array}
[c]{ccc}%
\left\vert t_{j}\right\vert \left(  \sum\limits_{n=1}^{\infty}\left\vert
j^{-n} x_n \right\vert ^{p}\right)  ^{1/p}\text{,} & \text{if}
& p\in\left[  1,\infty\right)  \text{,}\\
\left\vert t_{j}\right\vert ^{p}\sum\limits_{n=1}^{\infty}\left\vert
j^{-n} x_n \right\vert ^{p}\text{,} & \text{if} & p\in\left(
0,1\right)  \text{,}\\
\left\vert t_{j}\right\vert \displaystyle\sup_{n}\left\vert j^{-n} x_n \right\vert \text{,}& \text{if} & p=\infty  \text{.}\\
\end{array}
\right.  \\
& \leq \left\{
\begin{array}
[c]{ccc}%
\left\vert t_{j}\right\vert \left(  \sum\limits_{n=1}^{\infty}\left\vert
x_{n}\right\vert ^{p}\right)  ^{1/p}\text{,} & \text{if} & p\in\left[
1,\infty\right)  \text{,}\\
\left\vert t_{j}\right\vert ^{p}\sum\limits_{n=1}^{\infty}\left\vert
x_{n}\right\vert ^{p}\text{,} & \text{if} & p\in\left(  0,1\right)
\text{,}\\
\left\vert t_{j}\right\vert \displaystyle\sup_{n}\left\vert
x_{n}\right\vert \text{,} & \text{if} & p=\infty  \text{.}\\
\end{array}
\right.  \\
& =\left\{
\begin{array}
[c]{ccc}%
\left\vert t_{j}\right\vert \left\Vert \left( x_{n}\right)
_{n=1}^{\infty}\right\Vert _{p}\text{,} & \text{if} & p\in\left[
1,\infty\right)  \text{,}\\
\left\vert t_{j}\right\vert ^{p}\left\Vert \left(x_{n}\right)
_{n=1}^{\infty}\right\Vert _{p}\text{,} & \text{if} & p\in\left(  0,1\right)
\text{,}\\
\left\vert t_{j}\right\vert \left\Vert \left( x_{n}\right)
_{n=1}^{\infty}\right\Vert _{\infty}\text{,} & \text{if} & p=\infty  \text{.}\\
\end{array}
\right.  \text{,}%
\end{align*}
for each $j\in\mathbb{N}$, we have%
\begin{align*}
\sum\limits_{j=1}^{\infty}\left\Vert t_{j}u_{j}\right\Vert _{X}  &
\leq\left\{
\begin{array}
[c]{ccc}%
\left\Vert \left( x_{n}\right)  _{n=1}^{\infty}\right\Vert _{p}%
\sum\limits_{j=1}^{\infty}\left\vert t_{j}\right\vert \text{,} & \text{if} &
p\in\left[  1,\infty\right)  \text{,}\\
\left\Vert \left(  x_{n}\right)  _{n=1}^{\infty}\right\Vert _{p}%
\sum\limits_{j=1}^{\infty}\left\vert t_{j}\right\vert ^{p}\text{,} & \text{if}
& p\in\left(  0,1\right)  \text{,}\\
\left\Vert \left(  x_{n}\right)  _{n=1}^{\infty}\right\Vert _{\infty
}\sum\limits_{j=1}^{\infty}\left\vert t_{j}\right\vert \text{,} & \text{if} & p=\infty  \text{.}\\
\end{array}
\right.  \\
& <\infty\text{.}%
\end{align*}
Now, using the completeness of X and arguing as in Proposition \ref{linea} we conclude that $\sum\limits_{j=1}^{\infty} t_{j}u_{j}$ belongs to X, and therefore, the operator $T$ is well defined. Moreover, it is plain that $T$ is linear.
Our aim at this stage is to demonstrate that T is injective and that
\[
T\left(  \ell_{1}\right)  \subset Z\left(  X\right)  \cup\left\{
0\right\}  \text{.}%
\]
First, we aim to show that $T\left(  \ell_{1}\right)  \subset Z\left(
X\right)  \cup\left\{  0\right\}  $. For this, let%
\[
u=\sum\limits_{j=1}^{\infty}t_{j}u_{j}\in T\left(  \ell_{1}\right)
\setminus\left\{  0\right\}  \text{.}%
\]
Then, $u$ can be expressed as a sequence ot type%
\[
\left(  t_{1}x_{1}+\sum\limits_{j=2}^{\infty}t_{j}u_{1}^{\left(  j\right)
},\text{ }t_{1}x_{2}+\sum\limits_{j=2}^{\infty}t_{j}u_{2}^{\left(  j\right)
},\ldots\right)  \text{.}%
\]
Let us assume for contradiction that $u\notin Z\left(  X\right)  $. This
implies that there exists an increasing sequence of positive integers $\left(
n_{k}\right)  _{k=1}^{\infty}$ with $n_1>2$ such that%
\begin{equation}
t_{1}x_{n_{k}}+\sum\limits_{j=2}^{\infty}t_{j}u_{n_{k}}^{\left(  j\right)
}=0\text{ for each }k\in\mathbb{N}\text{.}%
\end{equation}
Since $u_{n_{k}}^{\left(  j\right)  }=j^{-n_{k}}x_{n_{k}}$, we have%
\[
\left(  t_{1}+\sum\limits_{j=2}^{\infty}t_{j}j^{-n_{k}}\right)  x_{n_{k}%
}=t_{1}x_{n_{k}}+\sum\limits_{j=2}^{\infty}t_{j}u_{n_{k}}^{\left(  j\right)
}=0\text{ for each }k\in\mathbb{N}\text{.}%
\]
Due to the presence of $u_1=\left(  x_{n}\right)  _{n=1}^{\infty}$ in $Z\left(
X\right)  $, there exists $N\in\mathbb{N}$ such that%
\[
x_{n_{k}}\neq0,\text{ for each }k\geq N\text{.}%
\]
Consequently,%
\[
t_{1}+\sum\limits_{j=2}^{\infty}t_{j}j^{-n_{k}}=0\text{ for each }k\geq
N\text{.}%
\]
Taking the limit as $k$ tends to infinity and utilizing Lemma
\ref{o lema que permite passar o limite pra dentro}, we deduce that%
\[
t_{1}=0\text{.}%
\]
Thus,%
\[
\sum\limits_{j=2}^{\infty}t_{j}j^{-n_{k}}=0\text{ for each }k\geq N\text{.}%
\]
Since%
\[
t_{2}+\sum\limits_{j=3}^{\infty}t_{j}\left(  \frac{j}{2}\right)  ^{-n_{k}%
}= 2^{n_k}\left( \sum\limits_{j=2}^{\infty}t_{j}j^{-n_{k}} \right)=0\text{ for each }k\geq N\text{,}%
\]
applying the Lemma \ref{o lema que permite passar o limite pra dentro} again we find
\[
t_{2}=0\text{.}%
\]
Proceeding recursively in this manner, we arrive at%
\[
t_{j}=0\text{ for each }j\in\mathbb{N}\text{.}%
\]
This yields a contradiction, as $u\neq0$. Therefore,%
\[
T\left(  \ell_{1}\right)  \subset Z\left(  X\right)  \cup\left\{
0\right\}  \text{.}%
\]
The arguments employed above, showing that $t_{j}=0$ for each $j\in\mathbb{N}%
$, only required the existence of an increasing sequence of positive integers
$\left(  n_{k}\right)  _{k=1}^{\infty}$ such that%
\begin{equation}
t_{1}x_{n_{k}}+\sum\limits_{j=2}^{\infty}t_{j}u_{n_{k}}^{\left(  j\right)
}=0\text{ for each }k\in\mathbb{N}\text{.}%
\end{equation}
Hence, if we assume%
\[
T\left(  \left(  t_{j}\right)  _{j=1}^{\infty}\right)  =\sum\limits_{j=1}%
^{\infty}t_{j}u_{n}=0\text{,}%
\]
then we will certainly have $\left(  t_{j}\right)  _{j=1}^{\infty}=0$, which establishes the injectivity of $T.$ Since $T\left(  1,0,0,\ldots\right)  =u_{1}$ and $\dim T(\ell_1) = \dim \ell_1 = \dim X$, the proof is complete.
\end{proof}

\begin{corollary}
For $X=c_{0}$ or $\ell_{p}$ with $p\in(0,\infty]$, the set $Z\left(  X\right)
$ is $\left(  1,\mathfrak{c}\right)  $-lineable.
\end{corollary}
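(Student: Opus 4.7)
The plan is to read this corollary as an immediate consequence of the maximal pointwise lineability of $Z(X)$ proved in the preceding proposition, once we unpack the definition. By analogy with the notion of $(\alpha,\beta)$-spaceability recalled in the introduction, $(1,\mathfrak{c})$-lineability of $Z(X)$ means that (i) $Z(X)$ is $1$-lineable and (ii) for every $1$-dimensional subspace $V_{1}\subseteq Z(X)\cup\{0\}$ there exists a $\mathfrak{c}$-dimensional subspace $F_{\mathfrak{c}}$ of $X$ with $V_{1}\subseteq F_{\mathfrak{c}}\subseteq Z(X)\cup\{0\}$ (the difference from spaceability being that $F_{\mathfrak{c}}$ is not required to be closed).

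For clause (i), I would just exhibit any nonzero $u\in Z(X)$; for instance the vector $(2^{-n})_{n=1}^{\infty}$ lies in $Z(X)$ for every $X\in\{c_{0},\ell_{p}\}$ with $p\in(0,\infty]$, so $\operatorname{span}(u)$ is a $1$-dimensional subspace contained in $Z(X)\cup\{0\}$. For clause (ii), I would take an arbitrary $1$-dimensional $V_{1}\subseteq Z(X)\cup\{0\}$ and write $V_{1}=\operatorname{span}(u_{1})$ for some $u_{1}\in Z(X)$. Applying the preceding proposition to this particular $u_{1}$ produces a subspace $Y\subseteq X$ with $u_{1}\in Y\subseteq Z(X)\cup\{0\}$ and $\dim Y=\dim X$. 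Setting $F_{\mathfrak{c}}:=Y$ gives $V_{1}\subseteq F_{\mathfrak{c}}\subseteq Z(X)\cup\{0\}$, which is what is required. The only additional standard input is that $\dim c_{0}=\dim \ell_{p}=\mathfrak{c}$ for all $p\in(0,\infty]$, which follows from the fact that each of these spaces is an infinite-dimensional separable (or, for $p=\infty$, at any rate complete metrizable with a Hamel basis of cardinality $\mathfrak{c}$) $F$-space.

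There is no real obstacle here; the only subtlety is notational, namely recognising that the proposition just proved says exactly that every $u_{1}\in Z(X)$ lives in a subspace of $Z(X)\cup\{0\}$ of dimension $\mathfrak{c}$, which is verbatim the content of $(1,\mathfrak{c})$-lineability. Hence the proof I would write is essentially one sentence invoking the previous proposition, plus a line fixing the definition of $(1,\mathfrak{c})$-lineability in the non-topological sense.
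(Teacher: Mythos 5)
Your proposal is correct and is exactly the intended derivation: the paper states this corollary without proof as an immediate consequence of the preceding proposition, since maximal pointwise lineability gives, for each $u_{1}\in Z(X)$ (hence for each $1$-dimensional $V_{1}\subseteq Z(X)\cup\{0\}$), a subspace $Y$ with $u_{1}\in Y\subseteq Z(X)\cup\{0\}$ and $\dim Y=\dim X=\mathfrak{c}$. Your unpacking of the (non-topological) definition of $(1,\mathfrak{c})$-lineability and the witness for $1$-lineability are both fine.
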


\section{Topological Structure of the Complement of $Z\left(  F\right)  $ When
$F$ is a Closed Subspace of $\ell_{p}$, $p\in(0,\infty]$\label{Section4}}

In this section, we will examine results related to the notion of $(\alpha, \beta)$-spaceability in the complement of $Z(F)$, (where $F$ is a closed subspace of $\ell_p$ with $p$ in $(0, \infty]$), as well as the non-spaceability of $Z(F)$ itself. It is important to note that the non-spaceability of $Z(F)$ has already been established in \cite[Corollaries 3.4
and 4.8]{Cariello} for the case where $p \in [1, \infty]$. To achieve our objectives, we present some lemmas that will be particularly crucial for the development of Theorems \ref{TheorempabspaceabilityZX} and \ref{Theonaoabspaceability}.

\begin{lemma}
\label{Lemmadoszeros1}For $p \in (0, \infty]$, let $V$ be an infinite dimensional subspace
of $\ell_{p}$ and $E$ be a subspace of $V$ contained in $V \setminus Z\left(  V\right)
.$ If $x_{1}=\left(  x_{n}^{\left(  1\right)  }\right)
_{n=1}^{\infty}$ and $x_{2}=\left(  x_{n}^{\left(  2\right)  }\right)
_{n=1}^{\infty}$ are linearly independent vectors in $E$, then there exists an infinite set
$\mathcal{N}$ of positive integers such that $x_{n}^{\left(  1\right)  }%
=x_{n}^{\left(  2\right)  }=0$ for all $n\in\mathcal{N}$.
\end{lemma}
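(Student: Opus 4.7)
The plan is to argue by contradiction: suppose that $\mathcal{N}_0 := \{n \in \mathbb{N} : x_n^{(1)} = x_n^{(2)} = 0\}$ is finite, and then exhibit a nonzero element of $E$ with only finitely many zero coordinates, contradicting the inclusion $E \subseteq V \setminus Z(V)$.

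For each scalar $t \in \mathbb{K}$ I would form $y_t := t x_1 + x_2 \in E$. Linear independence of $\{x_1, x_2\}$ guarantees $y_t \neq 0$, so $y_t \in V \setminus Z(V)$ and hence $y_t$ must have infinitely many zero coordinates. Writing $A_i := \{n \in \mathbb{N} : x_n^{(i)} = 0\}$, the key move is to split the zero set of $y_t$ according to membership in $A_1$: for $n \in A_1$ the $n$-th coordinate of $y_t$ equals $x_n^{(2)}$, which vanishes exactly on $\mathcal{N}_0 = A_1 \cap A_2$; for $n \notin A_1$ (so $x_n^{(1)} \neq 0$), the $n$-th coordinate vanishes iff $t = -x_n^{(2)}/x_n^{(1)}$. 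Thus the infinitude of the zero set of $y_t$ would force the second piece to be infinite for every $t$, which is already highly restrictive.

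The decisive step is then a cardinality argument: the collection $R := \{-x_n^{(2)}/x_n^{(1)} : n \in \mathbb{N} \setminus A_1\}$ is at most countable while $\mathbb{K} \in \{\mathbb{R}, \mathbb{C}\}$ is uncountable, so some $t_0 \in \mathbb{K} \setminus R$ exists. For this particular $t_0$, the zero set of $y_{t_0}$ reduces to $\mathcal{N}_0$, a finite set, contradicting $y_{t_0} \in V \setminus Z(V)$. Hence $\mathcal{N}_0$ must be infinite, and one may take $\mathcal{N} = \mathcal{N}_0$. I do not anticipate a serious obstacle: the argument is essentially set-theoretic, the only care required being the bookkeeping for the two cases $n \in A_1$ and $n \notin A_1$. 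In particular, no appeal to the $p$-norm, to completeness of $\ell_p$, or to any topological property is needed, which is precisely why the conclusion holds uniformly for all $p \in (0, \infty]$.
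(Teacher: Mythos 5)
Your proof is correct. It rests on the same cardinality principle as the paper's argument --- the uncountability of $\mathbb{K}$ against the countability of $\mathbb{N}$, applied to the pencil of vectors $t x_{1}+x_{2}\in E$ --- but you execute it more directly. The paper fixes, for each nonzero scalar $a$, the set $\mathcal{N}_{a}=\{n: a x_{n}^{(1)}=x_{n}^{(2)}\}$ (the zero set of $a x_{1}-x_{2}$), shows each $\mathcal{N}_{a}$ is infinite, that $\mathcal{N}_{a}\cap\mathcal{N}_{b}\subseteq\mathcal{N}$ for $a\neq b$, and then, assuming $\mathcal{N}$ finite, obtains uncountably many pairwise disjoint nonempty subsets of $\mathbb{N}$, which is impossible. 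You instead choose a single scalar $t_{0}$ outside the countable set $R$ of ratios $-x_{n}^{(2)}/x_{n}^{(1)}$ (over $n$ with $x_{n}^{(1)}\neq 0$) and observe that the zero set of $t_{0}x_{1}+x_{2}$ is then \emph{exactly} $\mathcal{N}_{0}$; since this vector is nonzero by linear independence and lies in $E\subseteq V\setminus Z(V)$, its zero set must be infinite, so $\mathcal{N}_{0}$ is infinite. This yields one explicit witness with its zero set pinned down precisely, avoiding the disjoint-family bookkeeping; both arguments are purely set-theoretic and, as you note, independent of $p$ and of any topology on $\ell_{p}$.
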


\begin{proof}
Let%
\[
\mathcal{N}:=\left\{  n\in\mathbb{N}:x_{n}^{\left(  k\right)  }=0\text{ for
every }k\in\left\{  1,2\right\}  \right\}  \text{.}%
\]
Fix $a\in\mathbb{K}\setminus\left\{  0\right\}  $, and let
\[
\mathcal{N}_{a}:=\left\{  n\in\mathbb{N}:ax_{n}^{\left(  1\right)  }%
=x_{n}^{\left(  2\right)  }\right\}  \text{.}%
\]
Since
\[
ax_{1}-x_{2}=\left(  ax_{n}^{\left(  1\right)  }-x_{n}^{\left(  2\right)
}\right)  _{n=1}^{\infty}\in E
\]
we can infer that $\mathcal{N}_{a}\subseteq\mathbb{N}$ is an infinite subset.
If $a$ and $b$ are two distinct nonzero scalars, and $n\in\mathcal{N}%
_{a}\cap\mathcal{N}_{b}$, then%
\[
\left(  a-b\right)  x_{n}^{\left(  1\right)  }=x_{n}^{\left(  2\right)
}-x_{n}^{\left(  2\right)  }=0\text{,}%
\]
which implies $x_{n}^{\left(  1\right)  }=x_{n}^{\left(  2\right)  }=0$,
showing that $n\in\mathcal{N}$. Thus,%
\begin{equation}
\mathcal{N}_{a}\cap\mathcal{N}_{b}\subset\mathcal{N}\text{.}
\label{aincusapdenainterNbemN0}%
\end{equation}
For the sake of contradiction, suppose $\mathcal{N}$ is finite. Since the sets
$\mathcal{N}_{a}^{\prime}:=\mathcal{N}_{a}-\mathcal{N}$ and $\mathcal{N}_{b}^{\prime}:=\mathcal{N}_{b}-\mathcal{N}$ are nonempty (remember that
$\mathcal{N}_{a}$ is infinite for each $a\in\mathbb{K}\setminus\left\{
0\right\},$) we concluded from $\left(  \text{\ref{aincusapdenainterNbemN0}}\right)
$ that $\mathcal{N}_{a}^{\prime}\cap\mathcal{N}_{b}^{\prime
}=\varnothing$. From the arbitrariness of $a$ and $b,$ then we have%
\[
\bigcup_{a\in\mathbb{K}\setminus\left\{  0\right\}  }\mathcal{N}_{a}^{\prime
}\subseteq\mathbb{N}%
\]
is uncountable, leading to a contradiction. Therefore, $\mathcal{N}$ is infinite.
\end{proof}

\begin{lemma}
\label{Lemmadoszeros2}For $p \in (0, \infty]$, let $V$ be an infinite dimensional subspace
of $\ell_{p}$, and $E$ be a subspace of $V$ contained in $V \setminus Z\left(  V\right)
.$ If $N>1$ is a positive integer, and $x_{1}=\left(
x_{n}^{\left(  1\right)  }\right)  _{n=1}^{\infty},\ldots,x_{N}=\left(
x_{n}^{\left(  N\right)  }\right)  _{n=1}^{\infty}$ are linearly independent vectors in $E$, then
there exists an increasing sequence of positive integers $\left(
n_{k}\right)  _{k=1}^{\infty}$ such that $x_{n_{k}}^{\left(  1\right)
}=\ldots=x_{n_{k}}^{\left(  N\right)  }=0$ for all $k\in\mathbb{N}$.
\end{lemma}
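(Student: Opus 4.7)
I would prove this by induction on $N$, taking $N=2$ as the base case, where the conclusion is exactly Lemma \ref{Lemmadoszeros1} (the infinite set it produces can simply be enumerated in increasing order).

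For the inductive step, assume the statement for $N-1$ and let $x_1, \ldots, x_N$ be linearly independent vectors in $E$. Set
\[
\mathcal{N} := \{ n \in \mathbb{N} : x_n^{(1)} = \cdots = x_n^{(N)} = 0 \},
\]
and my goal is to show $\mathcal{N}$ is infinite. The strategy is to replay the two-parameter trick of Lemma \ref{Lemmadoszeros1} inside the induction: for each nonzero scalar $a \in \mathbb{K}$ the $N-1$ vectors
\[
x_1 + a x_N,\ x_2,\ \ldots,\ x_{N-1}
\]
are linearly independent (since $x_1, \ldots, x_N$ are) and belong to $E$, so by the inductive hypothesis there exists an infinite $\mathcal{M}_a \subset \mathbb{N}$ on which $x_n^{(1)} + a x_n^{(N)} = 0$ and $x_n^{(2)} = \cdots = x_n^{(N-1)} = 0$.

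For two distinct nonzero scalars $a, b$ and any $n \in \mathcal{M}_a \cap \mathcal{M}_b$, subtraction of the two linear relations yields $(a-b) x_n^{(N)} = 0$, hence $x_n^{(N)} = 0$ and then $x_n^{(1)} = 0$; together with the previously established zeros this forces $n \in \mathcal{N}$. Thus $\mathcal{M}_a \cap \mathcal{M}_b \subseteq \mathcal{N}$ for all distinct nonzero $a, b$. If $\mathcal{N}$ were finite, then each $\mathcal{M}_a \setminus \mathcal{N}$ would still be infinite (being cofinite in $\mathcal{M}_a$), and the family $\{\mathcal{M}_a \setminus \mathcal{N} : a \in \mathbb{K} \setminus \{0\}\}$ would consist of uncountably many nonempty pairwise disjoint subsets of $\mathbb{N}$, which is impossible. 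Therefore $\mathcal{N}$ is infinite, and its elements can be enumerated as the required increasing sequence $(n_k)_{k=1}^{\infty}$.

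The only real subtlety, and the point where I would expect a naive induction to fail, is choosing which $N-1$ vectors to feed into the hypothesis. Applying it directly to $x_1, \ldots, x_{N-1}$ controls those $N-1$ coordinates but yields no hold on $x_N$; the perturbation $x_1 \mapsto x_1 + a x_N$, parametrised by the uncountable set $\mathbb{K} \setminus \{0\}$, is what couples $x_N$ into the argument and, just as in Lemma \ref{Lemmadoszeros1}, produces the cardinality contradiction once the inductive hypothesis is iterated over $a$.
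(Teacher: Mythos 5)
Your proof is correct and follows essentially the same strategy as the paper's: induction on $N$ with Lemma \ref{Lemmadoszeros1} as the base case, a one-parameter family of perturbed vectors indexed by $a\in\mathbb{K}\setminus\{0\}$ fed into the inductive hypothesis, and the same cardinality contradiction from uncountably many pairwise disjoint nonempty subsets of $\mathbb{N}$. The only (immaterial) difference is the choice of perturbation: the paper applies the hypothesis to the $N$ vectors $ax_1-x_k$, $k=2,\ldots,N+1$, whereas you perturb only the first vector, applying it to $x_1+ax_N,x_2,\ldots,x_{N-1}$; both yield the key inclusion $\mathcal{M}_a\cap\mathcal{M}_b\subseteq\mathcal{N}$.
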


\begin{proof}
We proceed by induction on $N>1$. Since Lemma \ref{Lemmadoszeros1} guarantees
that the result holds for $N=2$, let us assume it is valid for $N>2$, and
prove it for $N+1$. Indeed, let%
\[
\mathcal{N}_{0}:=\left\{  n\in\mathbb{N}:x_{n}^{\left(  k\right)  }=0\text{
for each }k\in\left\{  1,\ldots,N+1\right\}  \right\}  \text{.}%
\]
Fix $a\in\mathbb{K}\setminus\left\{  0\right\}  $, and let
\[
\mathcal{N}_{a}:=\left\{  n\in\mathbb{N}:ax_{n}^{\left(  1\right)  }%
=x_{n}^{\left(  k\right)  }\text{ for each }k\in\left\{  2,\ldots,N+1\right\}
\right\}  \text{.}%
\]
Since
\[
ax_{1}-x_{k}=\left(  ax_{n}^{\left(  1\right)  }-x_{n}^{\left(  k\right)
}\right)  _{n=1}^{\infty}\in E\text{ for each }k\in\left\{  2,\ldots
,N+1\right\}  \text{,}%
\]
by the induction hypothesis, there exists an infinite set $\mathcal{N}$ such
that%
\[
ax_{n}^{\left(  1\right)  }-x_{n}^{\left(  k\right)  }=0\text{ for all }%
n\in\mathcal{N}\text{ and }k\in\left\{  2,\ldots,N+1\right\}  \text{.}%
\]
Thus, for all $n\in\mathcal{N}$, we obtain%
\[
ax_{n}^{\left(  1\right)  }=x_{n}^{\left(  2\right)  }=\ldots=x_{n}^{\left(
N+1\right)  }%
\]
showing that $\mathcal{N}_{a}$ is infinite. Taking two distinct nonzero scalars $a$ and $b$, we claim%
\[
\mathcal{N}_{a}\cap\mathcal{N}_{b}\subset\mathcal{N}_{0}\text{.}%
\]
Indeed, if $n\in\mathbb{N}_{a}\cap\mathbb{N}_{b}$, then%
\[
\left(  a-b\right)  x_{n}^{\left(  1\right)  }=x_{n}^{\left(  k\right)
}-x_{n}^{\left(  k\right)  }=0\text{,}%
\]
implying $x_{n}^{\left(  1\right)  }=x_{n}^{\left(  2\right)  }=\ldots
=x_{n}^{\left(  N+1\right)  }=0$, showing that $n\in\mathcal{N}_{0}$.
Similarly to what we did in the proof of Lemma \ref{Lemmadoszeros1}, we
conclude that $\mathcal{N}_{0}$ is infinite, which verifies the case $N+1$.
Therefore, the result is complete.
\end{proof}

We will now examine the structure of the complement of $Z\left(  F\right)  $,
when $F$ is closed subespace of $\ell_p,$ $p \in (0, \infty],$ with a focus on the notions of $\left(  \alpha
,\beta\right)  $-spaceability. To this end, let us begin with the following concept as
introduced in \cite{Drewnowski}.

\begin{definition}
A sequence $\left(  u_{n}\right)  _{n=1}^{\infty}$ of elements in a
topological vector space $X$ is said to be topologically linearly independent
if, for any sequence $\left(  c_{n}\right)  _{n=1}^{\infty}\in\mathbb{K}%
^{\mathbb{N}}$ with $\sum_{n=1}^{\infty}c_{n}u_{n}=0$, it follows that
$\left(  c_{n}\right)  _{n=1}^{\infty}=0$.
\end{definition}

Based on this definition, if $\mathcal{S}\neq\left\{  0\right\}  $ is a
subspace of $\mathbb{K}^{\mathbb{N}}$ then we will say that a sequence
$\left(  u_{n}\right)  _{n=1}^{\infty}$ of elements of a topological vector
space $X$ is $\mathcal{S}$-topologically linearly independent in\textbf{ }$X$
(or $\mathcal{S}$-independent) if for each sequence $\left(  c_{n}\right)
_{n=1}^{\infty}\in\mathcal{S}$ with $\sum_{n=1}^{\infty}c_{n}u_{n}=0$, we have
$\left(  c_{n}\right)  _{n=1}^{\infty}=0$.

\bigskip

Within this perspective, as shown by Drewnowski $\left(  \text{see
\cite{Drewnowski}}\right)  $ the following result establishes a crucial link
between linear $\ell_{\infty}$-independence and linear independence, which
will be instrumental in proving the subsequent theorems.

\bigskip

\begin{proposition}
\label{Proposition sequence l-independent} Assume that $\left(  u_{n}\right)
_{n=1}^{\infty}$ is a linearly independent sequence in a Hausdorff topological
vector space $X$. Then there exists a sequence $(\lambda_{n})_{n=1}^{\infty}$ of positive real numbers such that $\left(
\lambda_{n}u_{n}\right)  _{n=1}^{\infty}$ is $\ell_{\infty}$-independent.
\end{proposition}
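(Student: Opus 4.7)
The plan is to build $(\lambda_n)$ by induction, arranging that (i) every $\ell_\infty$-combination $\sum c_n\lambda_n u_n$ converges in $X$, and (ii) the only such combination equal to $0$ is the trivial one. Since the paper's applications are in $F$-spaces (closed subspaces of $\ell_p$), I would work assuming $X$ carries a translation-invariant $F$-norm $F$ compatible with its topology; the general Hausdorff case is handled identically via a decreasing base $(V_n)$ of balanced $0$-neighborhoods with $V_{n+1}+V_{n+1}\subseteq V_n$ in place of $F$-balls.

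At stage $n$, by continuity of scalar multiplication combined with the compactness of the disk $\{|t|\le n\}$, I would choose $\lambda_n\in(0,1)$ so small that $F(t\lambda_n u_n)\le 2^{-n}$ for every scalar $|t|\le n$. This choice alone secures (i): for $(c_n)\in\ell_\infty$ with $\|c\|_\infty\le M$, once $n\ge M$ the individual terms are bounded in $F$-norm by $2^{-n}$, so the tail is $F$-Cauchy and the series converges by completeness of the ambient $F$-space.

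To obtain (ii), suppose $\sum c_n\lambda_n u_n=0$ with $(c_n)\in\ell_\infty$, and (after rescaling by a positive constant) $\|c\|_\infty\le 1$. The tail estimate then gives $F(w_N)\le 2^{-N}$ for every $N\ge 1$, where $w_N:=\sum_{n\le N}c_n\lambda_n u_n$, because $w_N=-\sum_{n>N}c_n\lambda_n u_n$. Since $(u_n)$ is algebraically linearly independent and $X$ is Hausdorff, each subspace $E_N:=\mathrm{span}(u_1,\dots,u_N)$ is finite-dimensional and topologically isomorphic to $\mathbb{K}^N$ via the coordinate map; in particular its coordinate functionals are continuous in the topology induced by $F$. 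The task that remains is to convert the bound $F(w_N)\le 2^{-N}$ into vanishing of the individual coefficients $c_n\lambda_n$.

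The main obstacle is precisely this final conversion: a priori the continuity constants of the coordinate functionals on $E_N$ may deteriorate as $N$ grows. The remedy is to sharpen the inductive choice of $\lambda_n$ at stage $n$ by additionally demanding that $\lambda_n u_n$ be $F$-separated from $E_{n-1}$ in a quantitative way, e.g.\ that $F(v+t\lambda_n u_n)\ge \tfrac12 F(v)$ for every $v\in E_{n-1}$ with $F(v)\ge 2^{-n-1}$ and every $|t|\le 1$. This is feasible by shrinking $\lambda_n$ further, since $\lambda_n u_n$ can be pushed into an arbitrarily small $F$-neighborhood of $0$ without disturbing the already-fixed finite-dimensional geometry of $E_{n-1}$, and $F$ is continuous at each point of $E_{n-1}$. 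Iterating the separation axis by axis yields a lower bound of the form $F(w_n)\gtrsim |c_n\lambda_n|$, which combined with $F(w_n)\le 2^{-n}\to 0$ forces $c_n\lambda_n=0$ and hence $c_n=0$ for every $n$, establishing the $\ell_\infty$-independence of $(\lambda_n u_n)$.
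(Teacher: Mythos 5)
The paper does not actually prove this proposition: it is imported verbatim from Drewnowski--Labuda--Lipecki \cite{Drewnowski}, so your argument has to stand on its own. Your setup is fine --- shrinking $\lambda_n$ so that $F(t\lambda_n u_n)\le 2^{-n}$ for $|t|\le n$ does give convergence and the tail estimate $F(w_N)\le 2^{-N}$ --- and you correctly locate the crux in converting $F(w_N)\to 0$ into vanishing of the coefficients. But the mechanism you propose for that conversion does not close the gap. Your separation property $F(v+t\lambda_n u_n)\ge \tfrac12 F(v)$ (valid only when $F(v)\ge 2^{-n-1}$) compares $F(w_n)$ with $F(w_{n-1})$, not with $|c_n\lambda_n|$, so the asserted bound $F(w_n)\gtrsim |c_n\lambda_n|$ does not follow from it; a lower bound for $F(w_n)$ in terms of $|c_n\lambda_n|$ would require $t\lambda_n u_n$ to stay quantitatively far from $E_{n-1}$, and shrinking $\lambda_n$ destroys rather than creates such separation, since it pushes $\lambda_n u_n$ toward $0\in E_{n-1}$. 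What the iteration actually yields, starting from the first index $m$ with $c_m\neq 0$, is $F(w_N)\ge 2^{-(N-m)}F(w_m)$, which decays at exactly the same geometric rate as the upper bound $F(w_N)\le 2^{-N}$; the two are compatible whenever $F(w_m)\le 2^{-m}$, which holds automatically by your very choice of $\lambda_m$, so no contradiction can ever result. Worse, if $c_m$ is nonzero but tiny then $F(w_m)=F(c_m\lambda_m u_m)$ falls below the threshold $2^{-m-2}$ and the iteration never starts.

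The step can be repaired, but by a different quantitative device. Since $X$ is Hausdorff, $E_N$ is topologically isomorphic to $\mathbb{K}^N$, so for each $N$ there is $\delta_N>0$, depending only on $u_1,\dots,u_N$, such that $v\in E_N$ with $F(v)<\delta_N$ has all coordinates of modulus less than $1/N$. Because $\delta_N$ is determined before $\lambda_{N+1},\lambda_{N+2},\dots$ are chosen, you may strengthen the inductive requirement to $\sup_{|t|\le n}F(t\lambda_n u_n)\le 2^{-n}\min\{\delta_1,\dots,\delta_{n-1}\}$. The tail estimate then gives $F(w_N)<\delta_N$ for all sufficiently large $N$, hence $|c_j\lambda_j|<1/N$ for every $j\le N$, and letting $N\to\infty$ forces $c_j=0$. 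This is the missing idea: the ``deteriorating continuity constants'' you worry about are harmless precisely because each $\delta_N$ is fixed before the later $\lambda$'s, which can therefore be chosen small enough to beat it.
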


To facilitate writing, from now on, for each positive integer $r$, we will denote
$$
W_r = \{ (x_n)_{n=1}^{\infty} \in \mathbb{K}^{\mathbb{N}} \; ; \; x_1 = \cdots = x_r = 0 \}.
$$

The next lemma will be of utmost importance as it will provide us with the step-by-step procedure for proceeding in the subsequent results, concerning both positive and negative outcomes.
\begin{lemma}
\label{Lema das m primeiras} For $p \in (0, \infty]$, if $F$ is a closed infinite dimensional subspace of $\mathbb{\ell}_{p}$, then there exists a sequence of linearly independent vectors $\left( y_{k} \right)_{k=1}^{\infty}$ in $F\setminus Z(F)$ which also makes $F\setminus Z(F)$ a lineable set.
\end{lemma}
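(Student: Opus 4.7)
My plan is to build $(y_k)_{k=1}^{\infty}$ inductively, maintaining the invariant that the common zero set $A_k := \bigcap_{i=1}^{k} Z(y_i)$ remains infinite at every stage. This invariant does the entire job at once: any nonzero linear combination $\sum c_i y_i$ vanishes on the infinite set $A_k$ and hence lies in $F \setminus Z(F)$, so the $y_k$ are automatically linearly independent and their algebraic span witnesses the lineability of $F \setminus Z(F)$.

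For the base case, I invoke the non-spaceability of $Z(\ell_p)$ (Cariello and Seoane-Sep\'{u}lveda for $p \in [1,\infty]$; analogous arguments cover $p \in (0,1)$) applied to the closed infinite-dimensional subspace $F$: since $F$ cannot lie inside $Z(\ell_p) \cup \{0\}$, one picks $y_1 \in F$ with $A_1 := Z(y_1)$ infinite.

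For the inductive step, enumerate $A_k = \{n_1 < n_2 < \cdots\}$ and, for each $N \in \mathbb{N}$, consider the closed subspace
\[
F_N := F \cap \bigl\{x \in \ell_p : x_{n_1} = \cdots = x_{n_N} = 0\bigr\},
\]
which has codimension at most $N$ in $F$ and is therefore infinite-dimensional. Applying non-spaceability inside $F_N$, one obtains a normalized vector $z_N \in F_N$ with infinitely many zero coordinates; in particular $z_N$ vanishes at the first $N$ indices of $A_k$. The remaining task is to distil from $(z_N)_N$ a single element $y_{k+1} \in F$, linearly independent from $y_1, \ldots, y_k$, that is zero at infinitely many indices of $A_k$ (so that $A_{k+1} = A_k \cap Z(y_{k+1})$ remains infinite).

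This extraction is the \textbf{main obstacle}. In the reflexive range $p \in (1, \infty)$, I extract a weakly convergent subsequence $z_{N_j} \rightharpoonup y$ in $\ell_p$; since $F$ is weakly closed and weak convergence in $\ell_p$ is coordinate-wise, $y \in F$ and $y(n_i) = 0$ for every $i$, so $y$ vanishes on all of $A_k$. If $y \neq 0$ and $y \notin \mathrm{span}(y_1, \ldots, y_k)$ I take $y_{k+1} := y$; otherwise a small perturbation by a vector in $F_N$ outside that span (available since $F_N$ is infinite-dimensional) secures linear independence while preserving the infinite vanishing on $A_k$. When the weak limit is $0$, a Bessaga-Pelczynski selection yields a basic subsequence equivalent to a block basic sequence of the standard basis, whose block structure produces a suitable $y_{k+1}$. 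The endpoint cases $p = 1$, $p \in (0,1)$, and $p = \infty$ require pointwise-convergence, $p$-norm, or weak-$\ast$ variants of the same scheme, but the skeleton remains unchanged.
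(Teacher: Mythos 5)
Your plan has the right shape (produce $y_k$'s whose finite linear combinations all vanish on a common infinite set), but both of its load-bearing steps are gaps. First, the base case and every inductive step invoke the non-spaceability of $Z(\cdot)$ to produce a single nonzero vector with infinitely many zero coordinates. In this paper that non-spaceability (Theorem \ref{nonspaceabilityZF01}) is \emph{deduced from} the present lemma, and for $p\in(0,1)$ the phrase ``analogous arguments'' is precisely the content being established here; so you are assuming the hard part. The only elementary fact available is that $W_m\cap F\neq\{0\}$ for every finite $m$ (because $m+1$ vectors truncated to their first $m$ coordinates are dependent), i.e.\ one can prescribe \emph{finitely} many zeros. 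Upgrading this to a genuine element of $F$ with \emph{infinitely} many zeros is the whole difficulty, and it is exactly where the paper works: it picks $v_k\in W_{m_k-1}\cap F$ with scalars $\lambda_k$ chosen so that the newly created nonzero coordinate at position $m_{k+1}+r_{k+1}$ dominates the accumulated earlier coordinates by a factor $\varepsilon_k=2^{-k}$, then telescopically subtracts small multiples of later $v_j$'s to annihilate the coordinates at the positions $m_j+r_j$; the corrections are summable, so the limits $y_k$ exist in the closed (hence complete) subspace $F$, satisfy $y_{m_k+r_k,k}\neq 0$ and $y_{m_i+r_i,k}=0$ for $i\neq k$, and this works uniformly for all $p\in(0,\infty]$ with no compactness input.

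Second, even granting the non-spaceability input, your extraction step does not close. In the reflexive range the weak limit of the $z_N$ can perfectly well be $0$ (the $z_N$ may behave like a block basis), and the Bessaga--Pe{\l}czy\'nski fallback only yields a subsequence \emph{equivalent} to a block basic sequence --- an isomorphic statement that gives no control over actual zero coordinates of any element of $F$, so no candidate $y_{k+1}$ emerges. For $p=\infty$ a norm-closed subspace need not be weak-$\ast$ closed, so weak-$\ast$ limits may leave $F$; for $p\in(0,1)$ there is no weak compactness to invoke at all, and these are exactly the cases the lemma is meant to cover. Finally, linear independence is not ``automatic'' from the invariant: a vanishing combination $\sum c_i y_i=0$ also lies in $F\setminus Z(F)$ (since $0$ has infinitely many zero coordinates), so membership there detects nothing; independence must be engineered, as the paper does by keeping one coordinate $y_{m_k+r_k,k}\neq0$ that all the other $y_i$ kill.
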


\begin{proof}
Let $m_{1}$ be a positive integer with $m_{1}>1$ and consider linearly
independent vectors $(x_{n}^{\left(  1\right)  })_{n=1}^{\infty},\ldots
,(x_{n}^{\left(  m_{1}\right)  })_{n=1}^{\infty}$ in $F$.  Since the $m_{1}$ vectors
$(x_{n}^{\left(  1\right)  })_{n=1}^{m_{1}-1},\ldots,(x_{n}^{\left(
m_{1}\right)  })_{n=1}^{m_{1}-1}$  are linearly dependent
in $\mathbb{K}^{m_{1}-1}$, there exists a non-trivial linear combination of them resulting in the null vector. Without
loss of generality, let us suppose that there exist scalars $a_{2}%
,\ldots,a_{m_{1}}$ such that%
\[
\left(  x_{n}^{\left(  1\right)  }\right)  _{n=1}^{m_{1}-1}+\sum
\limits_{i=2}^{m_{1}}a_{i}\left(  x_{n}^{\left(  i\right)  }\right)
_{n=1}^{m_{1}-1}=0\text{.}%
\]
Thus, we obtain that the vector $v_{1}:=(x_{n}^{\left(
1\right)  })_{n=1}^{\infty}+\sum_{i=2}^{m_{1}}a_{i}(x_{n}^{\left(  i\right)
})_{n=1}^{\infty}\in W_{m_1-1} \cap \left[F\setminus\left\{  0\right\} \right].$  Let
\[
r_1=\min\ \{ n \geq 0: x_{m_{1}+n}^{\left(  1\right)  }+\sum_{i=2}^{m_{1}}a_{i}x_{m_{1}+n%
}^{\left(  i\right)}\neq0 \} \text{.}%
\]
\newline Similarly, consider $z_{1}:=\left(  z_{n}^{\left(  1\right)
}\right)  _{n=1}^{\infty} \in W_{m_2} \cap \left[F\setminus\left\{  0\right\} \right],$ where 
$m_{2}:=m_{1}+r_{1}+1$ and let $r_{2}$ be the smallest
positive integer $n$ such that $z_{m_{2}+n}^{\left(  1\right)  }\neq0$.
Define $\varepsilon_{1}=\frac{1}{2}$ and let $u_{1}=\left(  u_{n}^{\left(  1\right)
}\right)  _{n=1}^{\infty}$ be a vector belonging to $W_{m_{2}+r_{2}-1} \cap \left[F\setminus\left\{  0\right\}\right].$ Define $m_{3}:=m_{2}+r_{2}+1$ and let $\lambda_{1}$ be a
non-zero scalar such that%
\[
|\lambda_{1}|>\max\left\{  \frac{ \left\vert u_{m_{2}+r_{2}}^{\left(  1\right)  } \right\vert}%
{\left\vert z_{m_{2}+r_{2}}^{\left(  1\right)  } \right\vert},\frac{\left\vert v_{m_{2}+r_{2}%
}^{\left(  1\right)  }\right\vert +\varepsilon_{1} \left\vert u_{m_{2}+r_{2} }^{\left(
1\right)  }\right\vert}{\varepsilon_{1} \left\vert z_{m_{2}+r_{2}}^{\left(  1\right)  } \right\vert }\right\}
\text{.}%
\]
By considering the vector $v_{2}:=\lambda_{1}z_{1}-u_{1}$, we have:

\begin{itemize}
\item $v_{m_{2}+r_{2}}^{\left(  2\right)  }=\lambda_{1}z_{m_{2}+r_{2}%
}^{\left(  1\right)  }-u_{m_{2}+r_{2}}^{\left(  1\right)  }\neq0,$ in fact,
$\lambda_{1}z_{m_{2}+r_{2}}^{\left(  1\right)  }-u_{m_{2}+r_{2}}^{\left(
1\right)  }=0$ if and only if $\lambda_{1}=\frac{u_{m_{2}+r_{2}}^{\left(
1\right)  }}{z_{m_{2}+r_{2}}^{\left(  1\right)  }},$

\item $v_{n}^{\left(  2\right)  }=0$ whenever $n\in\left\{  1,\ldots
,m_{2}-1\right\}  $,

\item $\left\vert v_{m_{2}+r_{2}}^{\left(  1\right)  }\right\vert
\leq\varepsilon_{1}\left\vert v_{m_{2}+r_{2}}^{\left(  2\right)  }\right\vert
$.
\end{itemize}
Consider now $z_{2}:=\left(  z_{n}^{\left(  2\right)
}\right)  _{n=1}^{\infty} \in W_{m_3} \cap \left[F\setminus\left\{  0\right\} \right],$ where
$m_{3}=m_2+r_2+1$ and let $r_{3}$ be the smallest
positive integer $n$ such that $z_{m_{3}+n}^{\left(  2\right)  }\neq0$.
Define $\varepsilon_{2}=\frac{1}{2^2}$ and let $u_{2}=\left(  u_{n}^{\left(  2\right)
}\right)  _{n=1}^{\infty}$ be a vector in $ W_{m_3+r_3-1} \cap \left[F\setminus\left\{  0\right\} \right].$ Let $\lambda_{2}$ be a
non-zero scalar such that%
\[
|\lambda_{2}|>\max\left\{  \frac{ \left\vert u_{m_{3}+r_{3}}^{\left(  2\right)  }\right\vert}  %
{ \left\vert z_{m_{3}+r_{3}}^{\left(  2\right)  }\right\vert},\frac{\left\vert v_{m_{3}+r_{3}%
}^{\left(  2\right)  }\right\vert + \left\vert v_{m_{3}+r_{3}%
}^{\left(  1\right)  }\right\vert+\varepsilon_{2} \left\vert u_{m_{3}+r_{3}}^{\left(
2\right)  }\right\vert }{\varepsilon_{2} \left\vert z_{m_{3}+r_{3}}^{\left(  2\right)  } \right\vert }\right\}
\text{.}%
\]
By considering the vector $v_{3}:=\lambda_{2}z_{2}-u_{2}$, we have:

\begin{itemize}
\item $v_{m_{3}+r_{3}}^{\left(  3\right)  }=\lambda_{2}z_{m_{3}+r_{3}%
}^{\left(  2\right)  }-u_{m_{3}+r_{3}}^{\left(  2\right)  }\neq0,$ in fact,
$\lambda_{2}z_{m_{3}+r_{3}}^{\left(  2\right)  }-u_{m_{3}+r_{3}}^{\left(
2\right)  }=0$ if and only if $\lambda_{2}=\frac{u_{m_{3}+r_{3}}^{\left(
2\right)  }}{z_{m_{3}+r_{3}}^{\left(  2\right)  }},$

\item $v_{n}^{\left(  3\right)  }=0$ whenever $n\in\left\{  1,\ldots
,m_{3}-1\right\}  $,

\item $\left\vert v_{m_{3}+r_{3}}^{\left(  1\right)  }\right\vert +\left\vert
v_{m_{3}+r_{3}}^{\left(  2\right)  }\right\vert \leq\varepsilon_{2}\left\vert
v_{m_{3}+r_{3}}^{\left(  3\right)  }\right\vert $.
\end{itemize}
This construction allows us to obtain, inductively, an increasing sequence of
positive integers $\left(  m_{k}\right)_{k=1}^{\infty}$ and a sequence of vectors $v_{k}:=\left(
v_{n}^{\left(  k\right)  }\right)  _{n=1}^{\infty}$ in $F$ satisfying:

\begin{itemize}
\item $v_{m_{k+1}+r_{k+1}}^{\left(  k+1\right)  }\neq0,$

\item $v_{n}^{\left(  k+1\right)  }=0$ whenever $n\in\left\{  1,\ldots
,m_{k+1}-1\right\},$

\item $\sum_{i=1}^{k}\left\vert v_{m_{k+1}+r_{k+1}}^{\left(  i\right)
}\right\vert \leq\varepsilon_{k}\left\vert v_{m_{k+1}+r_{k+1}}^{\left(
k+1\right)  }\right\vert, $
\end{itemize}

where $\varepsilon_k = \frac{1}{2^k},$ for each $k\in \mathbb{N}.$ Without loss of generality, let us assume that $\vert \vert v_k \vert\vert_p=1,$ for each $k \in \mathbb{N}.$ In the next step, fix $k$ in $\mathbb{N}$, and define the vectors
\[
x_{0,k} = v_k \, \, \, \text{and} \, \, \, x_{1,k}=v_{k}-\left(  \frac{v_{m_{k+1}+r_{k+1}}^{\left(  k\right)  }%
}{v_{m_{k+1}+r_{k+1}}^{\left(  k+1\right)  }}\right)  v_{k+1}\text{.}%
\]
Denote $x_{1,k}$ by
$\left(x_{1,n}^{ (k) } \right)_{n=1}^{\infty}.$ Note that, for each $n \in \mathbb{N},$
\[
\left\vert x_{1,n}^{(k)}\right\vert \leq\left\vert v_{n}^{(k)}\right\vert +\left\vert
\frac{v_{m_{k+1}+r_{k+1}}^{\left(  k\right)  }}{v_{m_{k+1}+r_{k+1}}^{\left(
k+1\right)  }}\right\vert \left\vert v_{n}^{(k+1)}\right\vert \leq \left\vert
v_{n}^{(k)}\right\vert +\left\vert v_{n}^{(k+1)}\right\vert \text{.}%
\]

The vector $x_{1,k}\in F$  also satisfies:

\begin{itemize}
\item $x_{1,m_{k}+r_{k}}^{\left(  k\right)  }=v_{m_{k}+r_{k}}^{\left(
k\right)  }$

\item $x_{1,n}^{\left(  k\right)  }=0$ whenever $n\in\left\{  1,\ldots
,m_{k}-1\right\}  $ and $x_{1,m_{k+1}+r_{k+1}}^{\left(  k\right)  }=0$.

\item $\left\Vert x_{1,k}-x_{0,k}\right\Vert _{p}=\left\{
\begin{array}
[c]{ccc}%
\text{ }\left\vert \frac{v_{m_{k+1}+r_{k+1}}^{\left(  k\right)  }}%
{v_{m_{k+1}+r_{k+1}}^{\left(  k+1\right)  }}\right\vert ^{p}\text{,} &
\text{if} & p\in\left(  0,1\right)  \text{,}\\
&  & \\
\left\vert \frac{v_{m_{k+1}+r_{k+1}}^{\left(  k\right)  }}{v_{m_{k+1}+r_{k+1}%
}^{\left(  k+1\right)  }}\right\vert \text{,} & \text{if} & p\in\left[
1,\infty\right]  \text{.}%
\end{array}
\right.  \leq\left\{
\begin{array}
[c]{ccc}%
\left(  \varepsilon_{k}\right)  ^{p}\text{,} & \text{if} & p\in\left(
0,1\right)  \text{,}\\
\varepsilon_{k}\text{,} & \text{if} & p\in\left[  1,\infty\right]  \text{.}%
\end{array}
\right.  $.
\end{itemize}

Thus, by considering the vector%
\[
x_{2,k}:=x_{1,k}-\left(  \frac{x_{1,m_{k+2}+r_{k+2}}^{\left(  k\right)  }%
}{v_{m_{k+2}+r_{k+2}}^{\left(  k+2\right)  }}\right)  v_{k+2}%
\]
and denoting $x_{2,k}$ by $\left(  x_{2,n}^{\left(  k\right)  }\right)
_{n=1}^{\infty}$, we have:

\begin{itemize}
\item $x_{2,m_{k}+r_{k}}^{\left(  k\right)  }=x_{1,k}=v_{m_{k}+r_{k}}^{\left(
k\right)  }$,

\item $x_{2,n}^{\left(  k\right)  }=0$ whenever $n\in\left\{  1,\ldots
,m_{k}-1\right\}  $ and $x_{2,m_{k+1}+r_{k+1}}^{\left(  k\right)
}=x_{2,m_{k+2}+r_{k+2}}^{\left(  k\right)  }=0$.
\end{itemize}
Due to the fact that%
\[
\left\vert x_{1,m_{k+2}+r_{k+2}}^{\left(  k\right)  }\right\vert
\leq\left\vert v_{m_{k+2}+r_{k+2}}^{\left(  k\right)  }\right\vert +\left\vert
v_{m_{k+2}+r_{k+2}}^{\left(  k+1\right)  }\right\vert \leq\varepsilon
_{k+1}\left\vert v_{m_{k+2}+r_{k+2}}^{\left(  k+2\right)  }\right\vert \text{,}%
\]
we also have

\begin{itemize}
\item $\left\Vert x_{2,k}-x_{1,k}\right\Vert _{p}=\left\{
\begin{array}
[c]{ccc}%
\text{ }\left\vert \frac{x_{1,m_{k+2}+r_{k+2}}^{\left(  k\right)  }%
}{v_{m_{k+2}+r_{k+2}}^{\left(  k+2\right)  }}\right\vert ^{p}\text{,} &
\text{if} & p\in\left(  0,1\right)  \text{,}\\
&  & \\
\left\vert \frac{x_{1,m_{k+2}+r_{k+2}}^{\left(  k\right)  }}{v_{m_{k+2}%
+r_{k+2}}^{\left(  k+2\right)  }}\right\vert \text{,} & \text{if} &
p\in\left[  1,\infty\right]  \text{.}%
\end{array}
\right.  \leq\left\{
\begin{array}
[c]{ccc}%
\left(  \varepsilon_{k+1}\right)  ^{p}\text{,} & \text{if} & p\in\left(
0,1\right)  \text{,}\\
\varepsilon_{k+1}\text{,} & \text{if} & p\in\left[  1,\infty\right]  \text{.}%
\end{array}
\right.  $.
\end{itemize}

Proceeding in this manner, we obtain a sequence $\left(  x_{j,k}\right)  _{j=1}^{\infty}$ in $F$ satisfying properties as those above. As for $m>r$ we have
\begin{align}
\left\Vert x_{m,k}-x_{r,k}\right\Vert _{p}  &  \leq\left\Vert x_{m,k}%
-x_{m-1,k}\right\Vert _{p}+\left\Vert x_{m-1,k}-x_{m-2,k}\right\Vert
_{p}+\cdots+\left\Vert x_{r+1,k}-x_{r,k}\right\Vert _{p}%
\label{CauchysequenceinZF}\\
&  \leq\varepsilon_{k+m-1}+\varepsilon_{k+m-2}+\cdots+\varepsilon_{k+r}%
\leq\varepsilon_{k+r}\nonumber
\end{align}

we can infer that
\[
\lim_{j\rightarrow\infty}x_{j,k}=y_{k}\in F\text{.}%
\]
By construction the $k$-th vector $y_{k}$ $($denoted by $\left(  y_{n,k}\right)
_{n=1}^{\infty})$ satisfies:
\[
y_{m_{k}+r_{k},k}=\lim_{j\rightarrow\infty}x_{j,m_{k}+r_{k}}^{\left(
k\right)  }=v_{m_{k}+r_{k}}^{\left(  k\right)  }\neq0.
\]
Also, as $x_{j,n}^{(k)}=0$ for all $n\in\{1,\ldots,m_{k}-1=m_{k-1}%
+r_{k-1}\}\cup\{m_{k+1}+r_{k+1},\ldots,m_{k+j}+r_{k+j}\}$, we have
\begin{equation}
y_{m_{i}+r_{i},k}=\lim_{j\rightarrow\infty}x_{j,m_{i}+r_{i}}^{\left(
k\right)  }=0\,\,\text{whenever}\,\,i\neq
k.\label{QaundosaodiferentesdazeronoLema}%
\end{equation}
It is plain that the sequence $\left(  y_{k}\right)  _{k=1}^{\infty}$ is linearly independent. If we assume, without loss of generality, that $\left\Vert y_{k}\right\Vert
_{p}=1$ for all $k\in\mathbb{N}$ and that these vectors form an $\ell_{\infty
}$-independent sequence $\left(  \text{see Proposition
\ref{Proposition sequence l-independent}}\right)  $, we have that the series $%
{\textstyle\sum\limits_{k=1}^{\infty}}
\left\Vert t_{k}2^{-k}y_{2k}\right\Vert _{p}$ converges whenever $\left(
t_{k}\right)  _{k=1}^{\infty}\in\ell_{\infty}$ and that $%
{\textstyle\sum\limits_{k=1}^{\infty}}
t_{k}2^{-k}y_{2k}=0\Rightarrow t_{k}=0$ for each $k\in\mathbb{N}$. This shows
that the linear operator%
\begin{align*}
T\colon\ell_{\infty} &  \rightarrow F\\
\left(  t_{k}\right)  _{k=1}^{\infty} &  \mapsto%
{\textstyle\sum\limits_{k=1}^{\infty}}
t_{k}2^{-k}y_{2k}%
\end{align*}
is well defined and injective. It remains to show now that $T\left(
\ell_{\infty}\right)  \subset F\setminus Z\left(  F\right)  $. Indeed, due to
the fact that
\[%
{\textstyle\sum\limits_{k=1}^{\infty}}
t_{k}2^{-k}y_{2k}=\left(
{\textstyle\sum\limits_{k=1}^{\infty}}
t_{k}2^{-k}y_{1,2k},%
{\textstyle\sum\limits_{k=1}^{\infty}}
t_{k}2^{-k}y_{2,2k},%
{\textstyle\sum\limits_{k=1}^{\infty}}
t_{k}2^{-k}y_{3,2k},\ldots\right)  \text{,}%
\]
we can invoke $\left(  \text{\ref{QaundosaodiferentesdazeronoLema}}\right)  $
to obtain
\[
y_{m_{i}+r_{i},2k}=\lim_{j\rightarrow\infty}x_{j,m_{i}+r_{i}}^{\left(
2k\right)  }=0\,\text{whenever }k\in\mathbb{N}\text{ and }i\text{ is an odd
positive integer.}%
\]
Thus,%
\[%
{\textstyle\sum\limits_{k=1}^{\infty}}
t_{k}2^{-k}y_{m_{i}+r_{i},2k}=0\text{ for each odd positive integer }i\text{,}%
\]
and indeed each expression $%
{\textstyle\sum\limits_{k=1}^{\infty}}
t_{k}2^{-k}y_{2k}$ in the context of the operator $T$ will be a sequence with
infinitely many zeros. This shows that $T\left(  \ell_{\infty}\right)  \subset
F\setminus Z\left(  F\right)  $ as we wanted. Therefore, the proof is complete.
\end{proof}

The above result provide the information that it is always possible to
obtain a vector in any infinite dimensional subspace of $\mathbb{K}^{\mathbb{N}}$ whose first $m$ coordinates are zero. Some ideas for the proof
of the same are due to the authors in \cite{Cariello}.

\bigskip

With this foundational understanding, we are now prepared to establish the main results this section.

\begin{theorem}\label{TheorempabspaceabilityZX}
For $p \in (0, \infty]$, if $F$ is an infinite dimensional closed subspace of $\ell_{p}$,
then the set $F\setminus Z\left(  F\right)  $ is $\left(  \alpha
,\mathfrak{c}\right)  $-spaceable for each $\alpha<\aleph_{0}$.
\end{theorem}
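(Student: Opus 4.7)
The plan is to follow the proof schema of Lemma \ref{Lema das m primeiras} but adapt the construction so that the resulting $\mathfrak{c}$-dimensional closed subspace is compatible with a prescribed $V_{\alpha}$. Let $V_{\alpha}\subseteq(F\setminus Z(F))\cup\{0\}$ be an $\alpha$-dimensional subspace with basis $x_{1},\ldots,x_{n}$. The first step is to apply Lemma \ref{Lemmadoszeros2} to produce an infinite set $\mathcal{N}\subseteq\mathbb{N}$ such that every $x_{j}$, and hence every $v\in V_{\alpha}$, satisfies $v_{\nu}=0$ for every $\nu\in\mathcal{N}$.

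Next, I would revisit the inductive construction in the proof of Lemma \ref{Lema das m primeiras} and run it inside $F$ with one additional constraint: at every \emph{odd} step $k$, the auxiliary vector whose first nonzero coordinate determines $m_{k}+r_{k}$ is chosen so that this position lies in $\mathcal{N}$. At each stage the candidate space $F\cap W_{m}$ is infinite-dimensional, so its set of possible ``first nonzero positions'' is infinite; combined with the infiniteness of $\mathcal{N}$---and, if necessary, a preliminary passage to a suitable closed subspace of $F$ (for instance $F_{\mathcal{N}}:=\{f\in F:f_{\nu}=0\text{ for all }\nu\in\mathcal{N}\}$ when infinite-dimensional, or a preimage under the coordinate-restriction map $F\to\ell_{p}(\mathcal{N})$)---the alignment can be enforced. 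The remainder of the argument then proceeds as in Lemma \ref{Lema das m primeiras}: one obtains a linearly independent sequence $(y_{k})_{k=1}^{\infty}\subseteq F\setminus Z(F)$ which, after applying Proposition \ref{Proposition sequence l-independent}, is $\ell_{\infty}$-independent, and the induced operator $T\colon\ell_{\infty}\to F$, $T((t_{k})):=\sum_{k=1}^{\infty}t_{k}\,2^{-k}y_{2k}$, has a closed $\mathfrak{c}$-dimensional image $T(\ell_{\infty})\subseteq(F\setminus Z(F))\cup\{0\}$ whose elements all vanish at every position $m_{i}+r_{i}$ with $i$ odd.

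Finally, I would set $F_{\mathfrak{c}}:=V_{\alpha}+T(\ell_{\infty})$. Since $V_{\alpha}$ is finite-dimensional and $T(\ell_{\infty})$ is closed, $F_{\mathfrak{c}}$ is closed in $F$; clearly $V_{\alpha}\subseteq F_{\mathfrak{c}}$ and $\dim F_{\mathfrak{c}}=\mathfrak{c}$. For any nonzero $v+w\in F_{\mathfrak{c}}$ with $v\in V_{\alpha}$ and $w\in T(\ell_{\infty})$, the vector $v$ vanishes at every position in $\mathcal{N}$ while $w$ vanishes at every $m_{i}+r_{i}$ with $i$ odd; since $\{m_{i}+r_{i}:i\text{ odd}\}$ is an infinite subset of $\mathcal{N}$, $v+w$ has infinitely many zero coordinates, so $v+w\in F\setminus Z(F)$. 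Thus $F_{\mathfrak{c}}\subseteq(F\setminus Z(F))\cup\{0\}$, yielding the $(\alpha,\mathfrak{c})$-spaceability. The main obstacle is the middle paragraph: in the original Lemma \ref{Lema das m primeiras} the positions $\{m_{i}+r_{i}\}$ arise only implicitly from the construction, and forcing the odd-indexed ones into the prescribed set $\mathcal{N}$ requires a careful tuning of the choices made at each step, plausibly handled by a case distinction on whether $F_{\mathcal{N}}$ is already infinite-dimensional.
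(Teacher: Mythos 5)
Your overall strategy coincides with the paper's at the top level: extract a common infinite zero set $\mathcal{N}$ of $V_{\alpha}$ via Lemma \ref{Lemmadoszeros2}, manufacture a $\mathfrak{c}$-dimensional closed subspace all of whose elements share infinitely many zero coordinates with $V_{\alpha}$, and take the (closed) sum. The gap is exactly the step you flag as ``the main obstacle'': the alignment mechanism. Forcing the positions $m_{i}+r_{i}$ ($i$ odd) into $\mathcal{N}$ requires, at each such stage, a nonzero vector of $W_{m}\cap F$ whose first nonzero coordinate beyond $m$ lies in $\mathcal{N}$, and such vectors need not exist. For instance, if $F=\overline{\mathrm{span}}\{e_{2k-1}:k\in\mathbb{N}\}$ (with $e_{n}$ the $n$-th canonical unit vector) and $V_{1}$ is spanned by a vector supported on all odd coordinates, then $\mathcal{N}$ is the set of even indices and \emph{no} nonzero element of $F$ has its first nonzero coordinate in $\mathcal{N}$. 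Your fallback $F_{\mathcal{N}}$ rescues that example (there $F_{\mathcal{N}}=F$), but it can also collapse: for $F=\mathrm{span}\{x\}+\overline{\mathrm{span}}\{e_{2k-1}+e_{2k}:k\in\mathbb{N}\}$ with $x$ supported on the odd coordinates, one gets $F_{\mathcal{N}}=\mathrm{span}\{x\}=V_{1}$, which is one-dimensional, and the ``preimage under the coordinate-restriction map'' is not a construction. In that last example the theorem still holds, but only after replacing $\mathcal{N}$ by a proper infinite subset such as $\{4k-2:k\in\mathbb{N}\}$, for which $F_{\mathcal{N}'}$ becomes infinite dimensional; you give no argument that such a judicious shrinking of $\mathcal{N}$ is always possible, so the case distinction you gesture at is precisely where a proof is still missing.

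For comparison, the paper does not try to steer the first-nonzero positions into $\mathcal{N}$. It first triangularizes a basis $\{w_{1},\ldots,w_{\alpha}\}$ of $V_{\alpha}=E$, then reruns the construction of Lemma \ref{Lema das m primeiras} with the thresholds interlaced with the sequence $(n_{i})$ (choosing $m_{i+1}>\max\{n_{i},m_{i}+r_{i}\}$) so that the new vectors $y_{k}$ themselves satisfy $y_{n_{i}}^{(k)}=0$ for every $i$, and finally feeds the $w_{k}$'s and the $y_{k}$'s into a single operator $T$; the resulting space is $\overline{T(\ell_{\infty})}$, which already contains $E$ and all of whose elements vanish on $\{n_{i}\}$. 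Two further, repairable, slips in your write-up: $T(\ell_{\infty})$ is not closed in general (with the weights $2^{-k}$ and $\|y_{2k}\|_{p}=1$ the operator behaves like a compact one), so you must pass to $\overline{T(\ell_{\infty})}$, which is harmless because vanishing on a fixed infinite coordinate set passes to limits and the sum of a closed subspace with a finite-dimensional one is closed; and the $\ell_{\infty}$-independence from Proposition \ref{Proposition sequence l-independent} is needed only to certify $\dim T(\ell_{\infty})=\mathfrak{c}$, not the inclusion in $(F\setminus Z(F))\cup\{0\}$.
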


\begin{proof}
Fix $\alpha<\aleph_{0}$ and let $E$ be a subspace of $F$ such that $E\subseteq
F\setminus Z(F)$ and $\dim(E)=\alpha$. The existence of such a subspace $E$ is
guaranteed by the Lemma \ref{Lema das m primeiras}. Let $\{w_{k}%
\}_{k=1}^{\alpha}$ be a basis for $E$ and denote $w_{k}$ by $(w_{n}%
^{(k)})_{n=1}^{\infty}$. We will start this proof by choosing positive
integers $s_{1}<s_{2}<\ldots<s_{\alpha}$ and a basis $\{w_{1},\ldots
,w_{\alpha}\}$ for $E$ such that for each $k\in\{1,\ldots,\alpha\}$,

\begin{itemize}
\item $w_{s_{k}}^{(k)}\neq0$,

\item $w_{s}^{(k)}=0$ whenever $s<s_{k}$.
\end{itemize}

We proceed as follows:

1. Define:
\[
s_{1} = \min\left\{  \min\left\{  n \in\mathbb{N} : w_{n}^{(1)} \neq0
\right\}  , \min\left\{  n \in\mathbb{N} : w_{n}^{(2)} \neq0 \right\}  ,
\ldots, \min\left\{  n \in\mathbb{N} : w_{n}^{(\alpha)} \neq0 \right\}
\right\}  .
\]

Without loss of generality, assume that $s_{1}=\min\left\{  n\in
\mathbb{N}:w_{n}^{(1)}\neq0\right\}  $.

2. For $k\in\{2,\ldots,\alpha\}$, consider the vector:
\[
x_{k}=%
\begin{cases}
w_{k}, & \text{if }w_{s_{1}}^{(k)}=0,\\
w_{k}-\frac{w_{s_{1}}^{(k)}}{w_{s_{1}}^{(1)}}w_{1}, & \text{otherwise}.
\end{cases}
\]

Also, define%
\[
x_{1}=w_{1}\text{.}%
\]
The minimality of $s_{1}$ ensures that $w_{s}^{(k)}=0$ whenever $s<s_{1}$.
Moreover, since for $k\in\{2,\ldots,\alpha\}$, we have $x_{s_{1}}^{(k)}=0$
(denote $x_{k}$ by $(x_{n}^{(k)})_{n=1}^{\infty}$), we can infer that
$x_{2},\ldots,x_{k}\in W_{s_{1}}$. This construction also allows us to infer
that the set $\{x_{k}\}_{k=1}^{\alpha}$ forms a basis for $E$.

3. Proceeding, define:
\[
s_{2}=\min\left\{  \min\left\{  n\in\mathbb{N}:x_{n}^{(2)}\neq0\right\}
,\ldots,\min\left\{  n\in\mathbb{N}:x_{n}^{(\alpha)}\neq0\right\}  \right\}
\text{.}%
\]

Without loss of generality, assume that $s_{2}=\min\left\{  n\in
\mathbb{N}:x_{n}^{(2)}\neq0\right\}  $.

4. For $k\in\{3,\ldots,\alpha\}$, consider the vector:
\[
z_{k}=%
\begin{cases}
x_{k}, & \text{if }x_{s_{2}}^{(k)}=0,\\
x_{k}-\frac{x_{s_{2}}^{(k)}}{x_{s_{2}}^{(2)}}x_{2}, & \text{otherwise}.
\end{cases}
\]
Also, define%
\[
z_{1}=x_{1}\text{ \ and \ }z_{2}=x_{2}\text{.}%
\]

The minimality of $s_{2}$ ensures that $x_{s}^{(k)}=0$ whenever $s<s_{2}$.
Moreover, since for $k\in\{3,\ldots,\alpha\}$, we have $z_{s_{2}}^{(k)}=0$
(denote $z_{k}$ by $(z_{n}^{(k)})_{n=1}^{\infty}$), we can infer that
$z_{3},\ldots,z_{k}\in W_{s_{2}}$. Furthermore, $z_{2}=(z_{n}^{(2)}%
)_{n=1}^{\infty}\in W_{s_{2}-1}\setminus W_{s_{2}}$, since $x_{s_{2}}%
^{(2)}\neq0$ and $z_{1}=(z_{n}^{(1)})_{n=1}^{\infty}\in W_{s_{2}-1}$. This
construction also allows us to infer that the set $\{z_{k}\}_{k=1}^{\alpha}$
forms a basis for $E$.

This process allows us to obtain positive integers $s_{1}<s_{2}<\ldots
<s_{\alpha}$ and consider a basis $\{w_{1},\ldots,w_{\alpha}\}$ in $E$ such
that for each $k\in\{1,\ldots,\alpha\}$, $w_{s_{k}}^{(k)}\neq0$ and
$w_{s}^{(k)}=0$ whenever $s<s_{k}$, desired.

Now, using the hypothesis that $E\subseteq F\setminus Z(F)$, we can invoke
Lemma \ref{Lemmadoszeros2}, and obtain an increasing sequence of positive
integers $(n_{i})_{r=1}^{\infty}$ such that%
\[
w_{n_{i}}^{(k)}=0\text{ forall }i\in\mathbb{N}\text{ and each }k\in
\{1,\ldots,\alpha\}\text{.}%
\]

To construct a subspace of dimension $\mathfrak{c}$ in $F\setminus Z(F)$
containing $E$, proceed as follows:

1. Take $m_{1}=s_{\alpha}+1$ and fix $v_{1}=(v_{n}^{(1)})_{n=1}^{\infty}$ in
$W_{m_{1}}\cap\lbrack F\setminus\{0\}]$.

2. Consider tamb\'{e}m $r_{1}=\min\left\{  n\in\mathbb{N}:v_{m_{1}+n}%
^{(1)}\neq0\right\}  $.

3. Fix a positive integer $m_{2}$ such that $m_{2}>\max\{s_{\alpha+1}%
,m_{1}+r_{1}\}$ and fix $z_{1}=(z_{n}^{(1)})_{n=1}^{\infty}$ in $W_{m_{2}}%
\cap\lbrack F\setminus\{0\}]$.

4. Then, take $r_{2}=\min\left\{  n\in\mathbb{N}:z_{m_{2}+n}^{(1)}%
\neq0\right\}  $, define $\varepsilon_{1}=1/2$ and consider $u_{1}%
=(u_{n}^{(1)})_{n=1}^{\infty}$ in $W_{m_{2}+r_{2}-1}\cap\lbrack F\setminus
\{0\}]$.

5. Furthermore, choose $\lambda_{1}$ in $\mathbb{K}\setminus\{0\}$ such that
\[
|\lambda_{1}|>\max\left\{  \frac{|u_{m_{2}+r_{2}}^{(1)}|}{|z_{m_{2}+r_{2}%
}^{(1)}|},\frac{|v_{m_{2}+r_{2}}^{(1)}|+\varepsilon_{1}|u_{m_{2}+r_{2}}%
^{(1)}|}{\varepsilon_{1}|z_{m_{2}+r_{2}}^{(1)}|}\right\}  .
\]

6. Now, define the vector $v_{2}:=\lambda_{1}z_{1}-u_{1}$ (denote $v_{2}$ by
$(v_{n}^{(2)})_{n=1}^{\infty}$).\newline

The above construction ensures that $v_{m_{2} + r_{2}}^{(2)} = \lambda_{1}
z_{m_{2} + r_{2}}^{(1)} - u_{m_{2} + r_{2}}^{(1)} \neq0 $ and $v_{n}^{(2)} = 0
$ whenever $n \in\{1, \ldots, n_{1}, \ldots, n_{\alpha+1}, \ldots, m_{2} - 1\}
$ and that $|v_{m_{2} + r_{2}}^{(1)}| \leq\varepsilon_{1} |v_{m_{2} + r_{2}%
}^{(2)}| $.

Proceed similarly as in the proof of the Lemma \ref{Lema das m primeiras},
(choosing $m_{i+1}>\max\{n_{i},m_{i}+r_{i}\}$ for each $i>1$), we also obtain
a sequence $y_{1}=(y_{n}^{(1)})_{n=1}^{\infty},y_{2}=(y_{n}^{(2)}%
)_{n=1}^{\infty},\ldots$ in $F\setminus Z(F)$ such that for each
$k\in\mathbb{N}$:

\begin{itemize}
\item $y_{m_{k}+r_{k}}^{\left(  k\right)  }\neq0$,

\item $y_{m_{i}+r_{i}}^{\left(  k\right)  }=0$ for each $i\neq k$,

\item $y_{n_{i}}^{\left(  k\right)  }=0$ for each $i\in\mathbb{N}$.
\end{itemize}

Defining $t_{1}=\ldots=t_{\alpha}=0$ and also $t_{i}=r_{i},s_{i}=m_{i},$ and
$w_{i}=y_{i}$, for $i>\alpha$ (to unify the notation with the basis $\left\{
w_{1},\ldots,w_{\alpha}\right\}  $ in $E$), we can conclude that for each
$k\in\mathbb{N}$:

\begin{itemize}
\item $w_{s_{k}+t_{k}}^{\left(  k\right)  }\neq0$,

\item $w_{s_{i}+t_{i}}^{\left(  k\right)  }=0$ whenever $k>i$,

\item $w_{n_{i}}^{\left(  k\right)  }=0$ for each $i\in\mathbb{N}$.
\end{itemize}

This shows that the vectors $(w_{k})_{k=1}^{\infty}$, besides belonging to
$F\setminus Z(F)$, are also linearly independent. Assume, without loss of
generality, that $\left\Vert w_{k}\right\Vert _{p}=1$ for each $k\in
\mathbb{N}$, and that $(w_{k})_{k=1}^{\infty}$ form an $\ell_{\infty}%
$-independent sequence $\left(  \text{see Proposition
\ref{Proposition sequence l-independent}}\right)  $. Now, consider the linear
operator%
\begin{align*}
T\colon\ell_{\infty} &  \rightarrow F\\
\left(  t_{k}\right)  _{k=1}^{\infty} &  \mapsto%
{\textstyle\sum\limits_{k=1}^{\alpha}}
t_{k}2^{-k}w_{k}\text{.}%
\end{align*}
Since $%
{\textstyle\sum\limits_{k=\alpha+1}^{\infty}}
\left\Vert t_{k}2^{-k}w_{k}\right\Vert _{p}<\infty$, the operator $T$ is well
defined. Moreover, due to the fact that $(w_{k})_{k=1}^{\infty}$ is
$\ell_{\infty}$-indepedent, $T$ also is injective. This shows that $\dim
T\left(  \ell_{\infty}\right)  =\dim\ell_{\infty}=\mathfrak{c}$. \\

We claim that the closure of $T\left(  \ell_{\infty}\right)  $ in $F$ $\left(
\text{denoted by }\overline{T\left(  \ell_{\infty}\right)  }\right)$ is contained in $F \setminus Z\left(  F\right)  $. For this, let $u\in\overline{T\left(
\ell_{\infty}\right)  }\setminus\left\{  0\right\}  $. Then there exist
sequences $\left(  t_{k}^{\left(  j\right)  }\right)  _{k=1}^{\infty}$ in
$\ell_{\infty}$, $j\in\mathbb{N}$, such that%
\[
u=\lim_{j\rightarrow\infty}T\left(  \left(  t_{k}^{\left(  j\right)  }\right)
_{k=1}^{\infty}\right)  \text{ in }F\text{.}%
\]
That is,
\[
u=\lim_{j\rightarrow\infty}T\left(  \left(  t_{k}^{\left(  j\right)  }\right)
_{k=1}^{\infty}\right)  =\lim_{j\rightarrow\infty}\left(
{\textstyle\sum\limits_{k=1}^{\infty}}
t_{k}^{\left(  j\right)  }2^{-k}w_{k} \right)  \text{.}%
\]
Due to the fact that $\sum_{k=1}^{\infty}t_{k}^{\left(
j\right)  }2^{-k}w_{n_{i}}^{\left(  k\right)  }=0$ for each
$i,j\in\mathbb{N}$, we get%
\[
\lim_{j\rightarrow\infty}\left(
{\textstyle\sum\limits_{k=1}^{\infty}}
t_{k}^{\left(  j\right)  }2^{-k}w_{n_{i}}^{\left(  k\right)  }\right)
=0\text{ for each }i\in\mathbb{N}\text{,}%
\]
which shows that $u$ belongs to $F \setminus Z(F).$ Therefore, since $E \subset\overline{T\left(  \ell_{1}\right)  } \subset F \setminus Z(F),$ and \( \overline{T(\ell_1)} \) is a subspace of dimension $\mathfrak{c}$ in $ F $ $(\text{due to the fact that } T \text{ is injective}),$ the result is done.
\end{proof}
Since $F \setminus Z(F)$ is closed by scalar multiplication, we have the following corollary:
\begin{corollary}
\label{CorollaryinZFpointwise}For $p \in (0, \infty]$, if $F$ is an infinite dimensional
closed subspace of $\ell_{p}$, then the set $F\setminus Z\left(  F\right)  $
is $\alpha$-pointwise lineable for each $\alpha\geq\aleph_{0}$.
\end{corollary}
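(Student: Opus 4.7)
The plan is to derive this corollary almost immediately from Theorem \ref{TheorempabspaceabilityZX} applied in the case $\alpha = 1$, combined with a Hamel basis argument to extract a subspace of the prescribed dimension. The key observation is the one highlighted in the statement preceding the corollary: for any $x \in F \setminus Z(F)$ and any nonzero scalar $c$, the vector $cx$ has exactly the same set of zero coordinates as $x$, so $cx \in F \setminus Z(F)$. Consequently the one-dimensional subspace $V_1 := \mathrm{span}(x)$ is entirely contained in $(F \setminus Z(F)) \cup \{0\}$.

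The main step is then to apply Theorem \ref{TheorempabspaceabilityZX} with the finite value $\alpha = 1$ and with this particular $V_1$. The theorem guarantees the existence of a closed $\mathfrak{c}$-dimensional subspace $F_{\mathfrak{c}} \subseteq F$ such that
\[
V_1 \subseteq F_{\mathfrak{c}} \subseteq (F \setminus Z(F)) \cup \{0\}.
\]
In particular $x \in F_{\mathfrak{c}}$. Note that we must have $\alpha \le \mathfrak{c}$, because $F$ is a subspace of $\ell_p$, whose algebraic dimension is $\mathfrak{c}$; so the relevant range is $\aleph_0 \le \alpha \le \mathfrak{c}$.

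To finish, I would fix any such cardinal $\alpha$, extend the singleton $\{x\}$ to a Hamel basis $B$ of $F_{\mathfrak{c}}$ (which has cardinality $\mathfrak{c} \ge \alpha$), and then choose a subset $B' \subseteq B$ with $x \in B'$ and $|B'| = \alpha$. Setting $Y := \mathrm{span}(B')$ yields an $\alpha$-dimensional subspace of $F_{\mathfrak{c}}$ containing $x$, hence
\[
x \in Y \subseteq F_{\mathfrak{c}} \subseteq (F \setminus Z(F)) \cup \{0\},
\]
which is exactly the condition for $\alpha$-pointwise lineability. There is no real obstacle here: all the analytic and combinatorial work has been absorbed into Theorem \ref{TheorempabspaceabilityZX}, and the corollary is just the observation that once one has a $\mathfrak{c}$-dimensional witness through each point, witnesses of every intermediate dimension come for free by basis truncation.
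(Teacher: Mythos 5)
Your proposal is correct and matches the paper's own (essentially one-line) justification: the paper simply notes that $F\setminus Z(F)$ is closed under scalar multiplication, so every point lies on a one-dimensional subspace inside $(F\setminus Z(F))\cup\{0\}$, and then the $(1,\mathfrak{c})$-spaceability from Theorem \ref{TheorempabspaceabilityZX} produces the $\mathfrak{c}$-dimensional witness, from which subspaces of any intermediate infinite dimension follow by basis truncation exactly as you describe. Your added remark that the range of $\alpha$ is implicitly bounded by $\mathfrak{c}$ is a reasonable clarification of the statement as written.
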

\section{Negative results}\label{Section5}

	In this section, we present two fundamental results that we believe will significantly enhance the understanding of the structure of \( Z(F) \) and its complement, in the case where \( F \) is a closed subspace of \( \ell_p \) with \( p \in (0, \infty] \). Based on Lemma \ref{Lema das m primeiras} developed in the previous section, we explore the issue of non spaceability.
	
	In \cite{Cariello}, Cariello and Seoane-Sepúlveda proved that if \( F \) is an infinite dimensional subspace of \( \ell_p \) with \( p \in (0, \infty] \), then \( Z(F) \) is not spaceable. In Theorem \ref{nonspaceabilityZF01}, based on Lemma \ref{Lema das m primeiras}, we provide another proof for this fact and, at the same time, show that the result also holds for \( p \in (0, 1) \). This result is significant as it extends the understanding of the structure of \( Z(F) \) to values of \( p \) not previously addressed in the literature.
	
	Furthermore, we explore the issue of \((\alpha, \beta)\)-non-spaceability. We demonstrate that, for \( p \in (0,\infty] \) and any \( \alpha \geq \aleph_0 \), if \( F \) is an infinite dimensional closed subspace of \( \ell_p \) that contains \( c_0 \), then the set \( F \setminus Z(F) \) is not \((\alpha, \mathfrak{c})\)-spaceable. This theorem not only generalizes but also complements previous results, providing a more comprehensive and robust understanding of the behavior of sequence spaces.
	
	Below, we present the detailed proofs of the mentioned results.

\begin{theorem}
\label{nonspaceabilityZF01} For $p\in\left(  0, \infty\right]  $, if $F$ is an
infinite dimensional closed subspace of $\ell_{p}$, then the set $Z\left(
F\right)  $ not is spaceable.
\end{theorem}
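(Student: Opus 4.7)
The plan is to reduce the theorem to Lemma~\ref{Lema das m primeiras} by a short contradiction argument. Suppose, towards a contradiction, that $Z(F)$ is spaceable. Then there exists an infinite dimensional closed subspace $V\subseteq F$ such that $V\setminus\{0\}\subseteq Z(F)$. I would first record that $V$ itself is a closed infinite dimensional subspace of $\ell_p$: indeed, $V$ is closed in $F$ and $F$ is closed in $\ell_p$.

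The key observation is that, with the definitions in force, $Z(F)\cap V=Z(V)$, since both sides consist of those elements of $V$ that (viewed as scalar sequences) have only finitely many zero coordinates. Thus the assumption $V\setminus\{0\}\subseteq Z(F)$ is equivalent to saying that every nonzero vector of $V$ belongs to $Z(V)$, i.e.
\[
V\setminus Z(V)=\{0\}.
\]
This translation is the entire content of the first half of the proof; it is completely routine once one pauses to match the definitions.

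The decisive step is then to apply Lemma~\ref{Lema das m primeiras} with $V$ in place of $F$, which is legitimate because $V$ is a closed infinite dimensional subspace of $\ell_p$. The lemma produces a linearly independent sequence $(y_k)_{k=1}^{\infty}$ in $V\setminus Z(V)$; in particular, each $y_k$ is a nonzero element of $V\setminus Z(V)$, flatly contradicting $V\setminus Z(V)=\{0\}$. Hence $Z(F)$ cannot be spaceable.

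There is essentially no genuine obstacle left once Lemma~\ref{Lema das m primeiras} is in hand: all the delicate work—the inductive construction of vectors with a single prescribed nonzero coordinate among a chosen block, the $\ell_\infty$-independence, and the completeness argument producing limits in $F$—is precisely what the lemma was proved for, and the present theorem is the clean conceptual payoff. The only thing one must be careful about when writing the proof is to spell out that passing from $F$ to $V$ is allowed (closedness and infinite dimensionality are inherited) and that $Z(F)\cap V=Z(V)$, so that the lemma applied to $V$ really contradicts the hypothesis coming from the spaceability of $Z(F)$.
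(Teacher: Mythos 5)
Your argument is correct and is essentially identical to the paper's own proof: both assume spaceability, pass to the closed infinite dimensional subspace $V$ (called $F_1$ in the paper) contained in $Z(F)\cup\{0\}$, and apply Lemma~\ref{Lema das m primeiras} to $V$ to produce a nonzero element of $V\setminus Z(V)$, contradicting $V\setminus\{0\}\subseteq Z(F)$. Your write-up is in fact slightly more careful than the paper's, since you explicitly record that closedness and infinite dimensionality pass to $V$ and that $Z(F)\cap V=Z(V)$.
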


\begin{proof}
Assume, for the sake of contradiction, that \( Z(F) \) is spaceable. Let \( F_1 \) be a closed infinite-dimensional subspace of \( F \) in \( Z(F) \cup \{0\} \). By Lemma \ref{Lema das m primeiras}, the complement of \( Z(F_1) \) in $F$ is nontrivial. That is, there exists \( x \in F_1 \setminus Z(F_1) \), \( x \neq 0 \). Since \( F_1 \subset Z(F) \cup \{0\} \), we have a contradiction. Therefore, the result is established.
\end{proof}

\begin{theorem}
\label{Theonaoabspaceability}For \( p \in (0, \infty] \), if \( F \) is a closed subspace of \( \ell_p \) that contains \( c_0 \), then the set \( F \setminus Z(F) \) cannot be \((\alpha, \mathfrak{c})\)-spaceable for any \( \alpha \geq \aleph_0 \).
\end{theorem}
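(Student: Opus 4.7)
The strategy is to exhibit, for every cardinal $\alpha$ with $\aleph_0 \leq \alpha \leq \mathfrak{c}$, an $\alpha$-dimensional subspace $V_\alpha \subseteq (F \setminus Z(F)) \cup \{0\}$ whose closure in $F$ contains a nonzero vector of $Z(F)$. Once such a $V_\alpha$ is in hand, any closed subspace of $F$ containing it must also contain $\overline{V_\alpha}$ and therefore hit $Z(F)$ outside the origin, so no closed $\mathfrak{c}$-dimensional subspace can fit between $V_\alpha$ and $(F \setminus Z(F)) \cup \{0\}$; this directly refutes $(\alpha, \mathfrak{c})$-spaceability. The case $\alpha > \mathfrak{c}$ will be disposed of at once because $F \subseteq \ell_p$ has linear dimension at most $\mathfrak{c}$, so $F \setminus Z(F)$ is not even $\alpha$-lineable.

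For the countable-dimensional core I will fix $w := (2^{-k})_{k \geq 1}$, which lies in $F$ (since $c_0 \subseteq F$ when $p = \infty$ and $F = \ell_p$ otherwise) and belongs to $Z(F)$ because it has no zero coordinates. Next I partition $\mathbb{N}$ into infinite blocks $\{A_n\}_{n \geq 1}$ whose minima tend to infinity; the $2$-adic valuation partition $A_n := \{2^{n-1}(2j-1) : j \geq 1\}$, giving $\min A_n = 2^{n-1}$, is a convenient explicit choice. Let $v_n$ be the sequence equal to $w$ on $A_n$ and $0$ elsewhere. Each $v_n$ has support $A_n$ and hence infinitely many zeros, and any finite linear combination $\sum_{n \in S} a_n v_n$ is supported on $\bigcup_{n \in S} A_n$, a set whose complement is still infinite. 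Thus $V_0 := \mathrm{span}\{v_n : n \geq 1\}$ is a countable-dimensional subspace of $(F \setminus Z(F)) \cup \{0\}$.

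The crucial point is then $w \in \overline{V_0}$: the partial sums $w_N := \sum_{n=1}^{N} v_n$ converge to $w$ in $F$ because for $p < \infty$ the tail $\sum_{k \geq 2^{N}} 2^{-kp}$ is a geometric remainder, while for $p = \infty$ one has $\|w - w_N\|_\infty = 2^{-\min \bigcup_{n > N} A_n} = 2^{-2^N} \to 0$. I expect the $p = \infty$ case to be the main technical hurdle: an arbitrary partition into infinite blocks would leave small indices in every tail and destroy sup-norm convergence, which is precisely why the partition must be engineered so that $\min A_n \to \infty$. This already handles $\alpha = \aleph_0$ by taking $V_\alpha := V_0$.

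For $\aleph_0 < \alpha \leq \mathfrak{c}$ I will enlarge $V_0$ by adjoining an $\alpha$-dimensional subspace $H'$ of $H := \{y \in F : \mathrm{supp}(y) \subseteq A_1\}$. Because $A_1$ is countably infinite and $F$ contains every finitely supported sequence, $\dim H = \mathfrak{c}$, so such an $H'$ exists. Any element $u + h$ with $u \in V_0$ and $h \in H'$ is supported in a finite union of $A_n$'s and therefore still has infinitely many zero coordinates, so $V_\alpha := V_0 + H'$ lies in $(F \setminus Z(F)) \cup \{0\}$ and, by cardinal arithmetic, has dimension $\aleph_0 + \alpha = \alpha$. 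Since $V_\alpha$ contains $V_0$, its closure still contains $w \in Z(F)$, which completes the argument.
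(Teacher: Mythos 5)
Your proof is correct, and it reaches the paper's conclusion by the same structural principle --- exhibit an $\alpha$-dimensional subspace of $(F\setminus Z(F))\cup\{0\}$ whose \emph{closure} already contains a nonzero element of $Z(F)$, so that no closed subspace whatsoever (let alone a $\mathfrak{c}$-dimensional one) can be interposed --- but your witness is built along a genuinely different route. The paper first extracts an $\aleph_0$-dimensional subspace $E$ with $E\setminus\{0\}\subseteq Z(F)$ (via Corollary \ref{ZFlineableCorollary}, which is where the hypothesis $c_0\subseteq F$ enters) and then reruns the recursive scheme of Lemma \ref{Lema das m primeiras} with shrinking tolerances $\varepsilon_{n,k}=(n+1)^{-k}$ to manufacture linearly independent vectors $y_{n,1}\in F\setminus Z(F)$ that converge in norm to a fixed $v_{1,1}\in Z(F)$. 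You instead take one explicit non-vanishing vector $w=(2^{-k})_k$ and chop it along a partition of $\mathbb{N}$ into blocks $A_n$ with $\min A_n\to\infty$: the pieces have pairwise disjoint infinite supports, every finite (indeed every) combination from $V_0+H'$ vanishes on a whole block, and the partial sums recover $w$. This is far more elementary and makes visible exactly where the topology matters (the condition $\min A_n\to\infty$ is needed only for $p=\infty$); what it gives up is portability, since it relies on $F$ containing specific vectors of $c_0$ (resp.\ $\ell_p$) rather than running inside an arbitrary lineable copy of $Z(F)$ --- but under the theorem's hypothesis that $c_0\subseteq F$ this costs nothing. Two small repairs: your reason for $\dim H=\mathfrak{c}$ (``$F$ contains every finitely supported sequence'') only gives $\dim H\geq\aleph_0$; argue instead that $H$ contains the closed infinite-dimensional subspace of all sequences in $c_0$ (resp.\ $\ell_p$) supported in $A_1$, and every infinite-dimensional $F$-space has algebraic dimension at least $\mathfrak{c}$ by Baire category. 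Also, when you adjoin $H'$, note that $\dim(V_0+H')=\alpha$ follows from $\alpha=\dim H'\leq\dim(V_0+H')\leq\aleph_0+\alpha=\alpha$, without needing to control $V_0\cap H'$.
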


\begin{proof}
Assume, by way of contradiction, that $F\setminus Z(F)$ is $(\alpha
,\mathfrak{c})$-spaceable for some $\alpha\geq\aleph_{0}$ and let $E$ be an
$\aleph_{0}$-dimensional subspace of $F$ such that $E\setminus\left\{
0\right\}  \subseteq Z(F)$. The existence of such a subspace $E$ is guaranteed
by the Corollary \ref{ZFlineableCorollary}. Fix $m_{1,1}>1$ and consider
linearly independent vectors $(x_{n}^{(1)})_{n=1}^{\infty},\ldots
,(x_{n}^{(m_{1,1})})_{n=1}^{\infty}$ in $E$. Since the $m_{1,1}$ vectors
$(x_{n}^{(1)})_{n=1}^{m_{1,1}-1},\ldots,(x_{n}^{(m_{1,1})})_{n=1}^{m_{1,1}-1}$
are linearly dependent in $\mathbb{K}^{m_{1,1}-1}$, there exists a non-trivial
linear combination of them resulting in the zero vector. Thus, we can assume,
without loss of generality, that there are scalars $a_{2},\ldots,a_{m_{1,1}}$
such that
\[
(x_{n}^{(1)})_{n=1}^{m_{1,1}-1}+\sum_{i=2}^{m_{1,1}}a_{i}(x_{n}^{(i)}%
)_{n=1}^{m_{1,1}-1}=0.
\]
Let
\[
v_{1,1}:=(x_{n}^{(1)})_{n=1}^{\infty}+\sum_{i=2}^{m_{1,1}}a_{i}(x_{n}%
^{(i)})_{n=1}^{\infty}.
\]
It is evident that $v_{1,1}\in W_{m_{1,1}-1}\cap\lbrack E\setminus\{0\}]$.

We proceed as follows:

\begin{enumerate}

\item Consider $r_{1,1}=\min\left\{  t\in\mathbb{N}\cup\{{0\}}:x_{m_{1,1}%
+t}^{(1)}+\sum_{i=2}^{m_{1,1}}a_{i}x_{m_{1,1}+t}^{(i)}\neq0\right\}  $ and
$\varepsilon_{1,1}=1/2$. 

\item Consider also $m_{1,2}=m_{1,1}+r_{1,1}+1$ and fix $z_{1,1}%
=(z_{1,n}^{(1)})_{n=1}^{\infty}$ in $W_{m_{1,2}}\cap\lbrack E\setminus\{0\}]$. 

\item Now, take $r_{1,2}=\min\{n\in\mathbb{N}:z_{1,m_{1,2}+n}^{(1)}\neq0\}$,
fix $u_{1,1}=(u_{1,n}^{(1)})_{n=1}^{\infty}$ in $W_{m_{1,2}+r_{1,2}-1}%
\cap\lbrack E\setminus\{0\}]$ and choose $\lambda_{1,1}$ in $\mathbb{K}%
\setminus\{0\}$ such that
\[
|\lambda_{1,1}|>\max\left\{  \frac{|u_{1,m_{1,2}+r_{1,2}}^{(1)}|}%
{|z_{1,m_{1,2}+r_{1,2}}^{(1)}|},\frac{|v_{1,m_{1,2}+r_{1,2}}^{(1)}%
|+\varepsilon_{1,1}|u_{1,m_{1,2}+r_{1,2}}^{(1)}|}{\varepsilon_{1,1}%
|z_{1,m_{1,2}+r_{1,2}}^{(1)}|}\right\}  .
\]

\item Define $v_{1,2}=\lambda_{1,1}z_{1,1}-u_{1,1}$ $($denote $v_{1,2}$ by
$(v_{1,n}^{(2)})_{n=1}^{\infty})$.
\end{enumerate}

The above construction ensures that $v_{1,m_{1,2} + r_{1,2}}^{(2)} =
\lambda_{1,1} z_{m_{1,2} + r_{1,2}}^{(1)} - u_{1,m_{1,2} + r_{1,2}}^{(1)}
\neq0 $ and $v_{1,n}^{(2)} = 0 $ whenever $n \in\{1, \ldots, m_{1,2} - 1\} $
and that $|v_{1,m_{1,2} + r_{1,2}}^{(1)}| \leq\varepsilon_{1,1} |v_{1,m_{1,2}
+ r_{1,2}}^{(2)}| $.

Proceeding similarly as in the proof of Lemma \ref{Lema das m primeiras},
$($with $\varepsilon_{1,k}=1/2^{k}$ in each step$)$, we obtain a sequence of
vectors $(y_{1,n})_{n=1}^{\infty}$ in $F\setminus Z(F)$ where $y_{1,1}%
=(y_{1,n}^{(1)})_{n=1}^{\infty}$ in particular satisfies:

\begin{itemize}

\item $y_{1,m_{1,1} + r_{1,1}}^{(1)} \neq0 $, 

\item $y_{1,n}^{(1)} = 0 $ for each $n < m_{1,1} $.
\end{itemize}

We claim that
\[
\Vert y_{1,1}-v_{1,1}\Vert_{p}\leq\sum_{k=1}^{\infty}2^{-qk},
\]
where $q=1,$ if $p\in\lbrack1,\infty]$ and $q=p,$ if $p\in(0,1)$. In fact, if
we consider the sequence $(x_{n,1})_{n=1}^{\infty}$ used to obtain the vector
$y_{1,1}$ in the construction above, we obtain
\[
||x_{j,1}-x_{j-1,1}||_{p}\leq\varepsilon_{1,j}^{q}=2^{-qj}\,\,\text{for
each}\,\,j\in\mathbb{N}\text{.}%
\]
This shows that
\begin{align*}
||x_{j,1}-x_{0,1}||_{p}  & \leq||x_{j,1}-x_{j-1,1}||_{p}+||x_{j-1,1}%
-x_{j-2,1}||_{p}\cdots+||x_{1,1}-x_{1,0}||_{p}\\
& \leq\varepsilon_{1,j}^{q}+\varepsilon_{1,j-1}^{q}+\ldots+\varepsilon
_{1,1}^{q}\\
& \leq\sum_{j=k}^{\infty}(\varepsilon_{1,k})^{q}=\sum_{k=1}^{\infty}%
2^{-qk}\text{,}%
\end{align*}
for each $j\in\mathbb{N}$, as desired.

Consequently,
\[
||y_{1,1}-v_{1,1}||_{p}=||\displaystyle\lim_{j\rightarrow\infty}%
x_{j,1}-x_{0,1}||=\displaystyle\lim_{j\rightarrow\infty}||x_{j,1}%
-x_{0,1}||\leq\lim_{j\rightarrow\infty}\sum_{k=1}^{\infty}(\varepsilon
_{1,k})^{q}=\sum_{k=1}^{\infty}2^{-qk}\text{,}%
\]
as desired.

Now, let us construct a vector $y_{2,1}=(y_{2,n}^{1})_{n=1}^{\infty}$ in
$F\setminus Z(F)$ and positive integers $m_{2,1}$ and $r_{2,1}$ such that:

\begin{itemize}

\item $y_{2,m_{2,1} + r_{2,1}}^{(1)} \neq0 $, 

\item $y_{2,n}^{(1)}=0$ for each $n<m_{2,1}$, $($in particular, $y_{2,m_{1,1}%
+r_{1,1}}^{(1)}=0)$, 

\item $\Vert y_{2,1}-v_{1,1}\Vert_{p}\leq\sum_{k=1}^{\infty}3^{-kq}$,
\end{itemize}

To achieve this, start by choosing $m_{2,1}=m_{1,1}+r_{1,1}+1$ and
$r_{2,1}=r_{1,1}$. Define also $v_{2,1}=v_{1,1}$ and denote $v_{2,1}$ by
$(v_{2,n}^{(1)})_{n=1}^{\infty}$. 
We proceed as follows:

\begin{enumerate}

\item Pick $\varepsilon_{2,1}:= 1/3$, consider $m_{2,2}=m_{2,1}%
+r_{2,1}+1$ and fix $z_{2,1}=(z_{2,n}^{(1)})_{n=1}^{\infty}$ in $W_{m_{2,2}%
}\cap\lbrack E\setminus\{0\}]$. 

\item Let $r_{2,2}=\min\{n\geq1:z_{2,m_{2,2}+n}^{(1)}\neq0\}$, fix
$u_{2,1}=(u_{2,n}^{(1)})_{n=1}^{\infty}$ in $W_{m_{2,2}+r_{2,2}-1}\cap\lbrack
E\setminus\{0\}]$ and choose a scalar $\lambda_{2,1}$ in $\mathbb{K}%
\setminus\{0\}$ such that
\[
|\lambda_{2,1}|>\max\left\{  \frac{|u_{2,m_{1,2}+r_{1,2}}^{(1)}|}%
{|z_{2,m_{1,2}+r_{1,2}}^{(1)}|},\frac{|v_{2,m_{2}+r_{2}}^{(1)}|+\varepsilon
_{2,1}|u_{2,m_{1,2}+r_{1,2}}^{(1)}|}{\varepsilon_{2,1}|z_{2,m_{1,2}+r_{1,2}%
}^{(1)}|}\right\}  .
\]

\item Define $v_{2,2}:=\lambda_{2,1}z_{2,1}-u_{2,1}$ $($denote $v_{2,2}$ by
$(v_{2,n}^{(2)})_{n=1}^{\infty})$.
\end{enumerate}

Proceeding similarly as in the proof of Lemma \ref{Lema das m primeiras}, with
$\varepsilon_{2,k}=3^{-k}$ for each $k\in\mathbb{N}$, we obtain a sequence of
vectors $(y_{2,n})_{n=1}^{\infty}$ in $F\setminus Z(F)$ with $y_{2,1}$
satisfying the desired conditions.

Proceeding recursively (starting with $v_{j+1,1}=v_{1,1}$ and $m_{j+1,1}%
=m_{j,1}+r_{j,1}+1$ and taking $\varepsilon_{j+1,k}=\left(  j+2\right)  ^{-k}$
for each $k\in\mathbb{N}$) at the $(j+1)$-th step, we will have in hand the
sequence $\{y_{n,1}\}_{n=1}^{\infty}$ of vectors in $F\setminus Z(F)$, which
satisfies: 

\begin{itemize}

\item $y_{n,m_{n,1} + r_{n,1}}^{(1)} \neq0 $, 

\item $y_{n,m_{i,1} + r_{i,1}}^{(1)} = 0 $ for each $i < n $, 

\item $\Vert y_{n,1}-v_{n,1}\Vert_{p}=\Vert y_{n,1}-v_{1,1}\Vert_{p}%
\leq\displaystyle\sum_{k=1}^{\infty}\left(  n+1\right)  ^{-qk}$.
\end{itemize}

It is plain that the vectors $(y_{n,1})_{n=1}^{\infty}$ are linearly
independent. We claim that $\text{span} \{y_{n,1}\}_{n=1}^{\infty}$ is
contained in $F \setminus Z(F) $. Indeed, let $w$ be a element in $\text{span}
\{y_{n,1}\}_{n=1}^{\infty}$, then there are scalars $a_{1}, \ldots, a_{N}$
such that
\[
w = \sum_{i=1}^{N} a_{i} y_{i,1} = \left(  a_{1} y_{1,1}^{(1)} + a_{2}
y_{2,1}^{(1)} + \ldots+ a_{N} y_{N,1}^{(1)}, \, a_{1} y_{1,2}^{(1)} + a_{2}
y_{2,2}^{(1)} + \ldots+ a_{N} y_{N,2}^{(1)}, \, \ldots\right) .
\]
By Lemma \ref{Lemmadoszeros2}, there exists a increasing sequence of positive
integers $(s_{k})_{k=1}^{\infty}$ such that $y_{n,s_{k}}^{(1)}=0,$ for all $n
\in\{1, \ldots, N\}$ and for all $k \in\mathbb{N}.$ Thus, denoting $w =
(w_{n})_{n=1}^{\infty} $ we have that $w_{s_{k}}=0,$ for all $k \in
\mathbb{N},$ and therefore, $w \in F \setminus Z(F),$ as desired.\newline

Now, since the set $\{y_{n,1}\}_{n=1}^{\infty}$ is linearly independent, we
have
\[
\text{dim}\,\text{span}\{y_{n,1}\}_{n=1}^{\infty}=\aleph_{0}.
\]
On the other hand, since
\[
\underset{j\rightarrow\infty}{\lim}\sum\limits_{k=1}^{\infty}j^{-qk}%
=\underset{j\rightarrow\infty}{\lim}\dfrac{1}{1-j^{q}}=0\text{,}%
\]
we can infer that $v_{1,1}\in Z(F)\cap\overline{\text{span}}\{y_{n,1}%
\}_{n=1}^{\infty}$. If we assume without loss of generality that $\Vert
y_{n,1}\Vert_{p}=1$ for each $n\in\mathbb{N}$, and that $(y_{n,1}%
)_{n=1}^{\infty}$ form an $\ell_{\infty}$-independent sequence (see
Proposition \ref{Proposition sequence l-independent}), the linear operator
\[
T\colon\ell_{1}\rightarrow F,\quad(t_{k})_{k=1}^{\infty}\mapsto\sum
_{k=1}^{\infty}t_{k}2^{-k}y_{k,1}%
\]
is well defined and injective. Thus, if we take an $\alpha$-dimensional
subspace $W_{\alpha}$ such that
\[
\text{span}\{y_{n,1}\}_{n=1}^{\infty}\subseteq W_{\alpha}\subseteq T(\ell
_{1}),
\]
then, since $Z(F)\cap\overline{\text{span}}\{y_{n,1}\}_{n=1}^{\infty}%
\neq\varnothing$, we obtain that $Z(F)\cap\overline{W_{\alpha}}\neq
\varnothing$ which leads to a contradiction. Therefore, the result is done.\\
\end{proof}

The combination of Theorems \ref{TheorempabspaceabilityZX} and \ref{Theonaoabspaceability} provides the information that for \( p > 0 \), if \( F \) is a closed subspace of \( \ell_p \) that contains \( c_0 \), then the set \( F \setminus Z(F) \) is \((\alpha, \mathfrak{c})\)-spaceable if and only if \(\alpha\) is finite. This result highlights the robust structure of the \( \ell_p \) space concerning the concept of \((\alpha, \mathfrak{c})\)-spaceability, when the subspace \( F \) includes \( c_0 \). It emphasizes that any subspace generated by \(\alpha\) elements in \( F \setminus Z(F) \) can be extended to a subspace of maximal dimension still within \( F \setminus Z(F) \), reflecting a significant degree of linearity.

However, it remains an open question whether this result holds when \( F \) is a subspace of \( \ell_{\infty} \setminus c_0 \).

\section*{Acknowledgements}
We would like to thank Professor Dr. Anselmo Raposo Jr. for his valuable contributions during the development of the lemmas in Section 4.


\begin{thebibliography}{99}                                                                                               %
\bibitem{Mikaela} M. Aires, G. Botelho, \textit{Spaceability of sets of non-injective maps}, preprint. arXiv:2403.19855.

\bibitem{Nacib} N.G. Albuquerque, L. Coleta, \textit{Large structures within the class of summing operators},  J. Math. Anal. Appl. \textbf{526}, 2 (2023).

\bibitem {Araujo}G. Ara\'{u}jo, A. Barbosa, A. Raposo Jr., and G. Ribeiro,
\textit{On the spaceability of the set of functions in the Lebesgue space
}$L_{p}$\textit{ which are not in }$L_{q}$, Bull Braz Math Soc, New Series
\textbf{54}, 44 (2023).

\bibitem {Aron}R.M. Aron, L. Bernal-Gonz\'{a}lez, D. Pellegrino and J.B.
Seoane-Sep\'{u}lveda, \textit{Lineability: the search for linearity in
mathematics}, Monographs and Research Notes in Mathematics. CRC Press, Boca
Raton, FL, 2016.

\bibitem {AGSS}R.M. Aron, V.I. Gurariy and J.B. Seoane-Sep\'{u}lveda,
\textit{Lineability and spaceability of sets of functions on }$\mathbb{R}$,
Proc. Amer. Math. Soc. \textbf{133} (2004), 795--803.

\bibitem {Bernal} L. Bernal-Gonz\'{a}lez and M.O. Cabrera, \textit{Lineability
criteria, with applications}, J. Funct. Anal. \textbf{266} (2014), 3997--4025.

\bibitem {S lineability} L. Bernal-Gonz\'{a}lez, J.A. Conejero, M.
Murillo-Arcila, J.B. Seoane-Sep\'{u}lveda, $[S]$\textit{-linear and convex
structures in function families}, Linear Algebra Appl. \textbf{579} (2019), 463--483.

\bibitem{BRSH} L. Bernal-González, D.L. Rodríguez-Vidanes, J.B. Seoane-Sepúlveda, H.J. Tag, \textit{New Results in Analysis of Orlicz-Lorentz spaces}, preprint. arXiv:2312.13903.

\bibitem {Botelho}G. Botelho, V.V. F\'{a}varo, D. Pellegrino and J.B.
Seoane-Sep\'{u}lveda, $L_{p}[0,1]\setminus\cup_{q>p}L_{q}[0,1]$ \textit{is
spaceable for every} $p>0$, Linear Algebra Appl. \textbf{436} (2012), 2963--2965.
\bibitem{Sheldon} M. Caballer, S. Dantas, D.L. Rodríguez-Vidanes, \textit{Searching for linear structures in the failure of the Stone-Weierstrass theorem}, preprint. arXiv:2405.06453. 

\bibitem {Cariello}D. Cariello, J.B. Seoane-Sep\'{u}lveda, \textit{Basic
sequences and spaceability in }$\ell_{p}$\textit{ spaces}. J. Funct. Anal.
\textbf{266} (2014), 3797--381

\bibitem {Diogo/Anselmo}D. Diniz and A. Raposo Jr, \textit{A note on the geometry of
certain classes of linear operators.} Bull. Braz. Math. Soc. (N.S.) \textbf{52} (2021), 1073--1080.

\bibitem {Diogo}D. Diniz, V.V. F\'{a}varo, D. Pellegrino and A. Raposo Jr,
\textit{Spaceability of the sets of surjective and injective operators between
sequence spaces.} Rev. R. Acad. Cienc. Exactas F\'{\i}s. Nat. Ser. A Mat. RACSAM \textbf{114} (2020), no. 4, Paper No. 194, 11 pp.



\bibitem {Drewnowski}L. Drewnowski, I. Labuda and Z. Lipecki,
\textit{Existence of quasi-bases for separable topological linear spaces,}
Arch. Math. (Basel) \textbf{37} (1981), no. 5, 454--456.

\bibitem{Enflo} P.H. Enflo, V.I. Gurariy, J.B. Seoane-Sepúlveda, \textit{Some results and open questions on spaceability in function spaces}, Trans. Amer. Math. Soc. \textbf{366} (2) (2014) 611–625

\bibitem {Pilar}V.V. F\'{a}varo, D. Pellegrino and P. Rueda, \textit{On the size of
the set of unbounded multilinear operators between Banach spaces}, Linear
Algebra Appl. \textbf{606} (2020), 144--158.

\bibitem {FPT}V.V. F\'{a}varo, D. Pellegrino and D. Tom\'{a}z,
\textit{Lineability and spaceability: a new approach}, Bull. Braz. Math. Soc.
New Ser. \textbf{51} (2019), 27--46.

\bibitem {FDAG}V.V. F\'{a}varo, D. Pellegrino, A. Raposo Jr., and G. Ribeiro,
\textit{General criteria for a strong notion of lineability}, Proc. Amer.
Math. Soc. \textbf{152} (2024), 941--954.

\bibitem {Fonf} V. Fonf, V.I. Gurariy, V. Kadec
\textit{An infinite dimensional subspace of \( C[0,1] \) consisting of nowhere differentiable functions}
C. R. Acad. Bulgare Sci., \textbf{52} (1999), pp. 13-16

\bibitem {Gurariy-Quarta} V.I. Gurariy, L. Quarta, \textit{On lineability of sets of continuous functions}, J. Math. Anal. Appl. \textbf{294} (1) (2004) 62–72.

\bibitem {Kalt}N.J. Kalton, \textit{Quasi-Banach spaces}, Handbook of the
Geometry of Banach Spaces, Vol. 2, 1099-1130, North-Holland, Amsterdam, 2003.

\bibitem {Levine e Milman}B. Levine and D. Milman, \textit{On linear sets in
space }$C$\textit{ consisting of functions of bounded variation}, Comm. Inst.
Sci. Math. M\'{e}c. Univ. Kharkoff [Zapiski Inst. Mat. Mech.] (4) \textbf{16}
(1940), 102--105 (Russian, with English summary).

\bibitem {Pellegrino}D. Pellegrino and A. Raposo Jr, \textit{Pointwise
lineability in sequence spaces}. Indag. Math. (N.S.) \textbf{32} (2021), 536--546.

\bibitem{RR} A. Raposo Jr., G. Ribeiro, \textit{Pointwise linear separation property and infinite pointwise dense lineability}, preprint. arXiv:2311.09110.
\bibitem {Stiles}W.J. Stiles, \textit{On properties of subspaces of} $\ell
_{p}$, $0<p<1$, Trans. Amer. Math. Soc. \textbf{149} (1970), 405-415.
\end{thebibliography}
\end{document}